\numberwithin{equation}{section}
\theoremstyle{plain}
\newtheorem{theorem}{Theorem}[section]
\newtheorem*{theorem*}{Theorem}
\newtheorem{lemma}[theorem]{Lemma}
\providecommand{\customgenericname}{}
\newcommand{\newcustomtheorem}[2]{%
	\newenvironment{#1}[1]
	{%
		\renewcommand\customgenericname{#2}%
		\renewcommand\theinnercustomgeneric{##1}%
		\innercustomgeneric
	}
	{\endinnercustomgeneric}
}
\theoremstyle{definition}
\newtheorem{definition}[theorem]{Definition}
\newtheorem*{example*}{Example}
\newtheorem*{examples*}{Examples}
\newtheorem{remark}[theorem]{Remark}
\newtheorem*{remark*}{Remark}
\newtheorem*{remarks*}{Remarks}
\newtheorem*{note*}{Note}
\newtheoremstyle{named}{}{}{\itshape}{}{\bfseries}{.}{.5em}{#1\thmnote{ #3}}
\theoremstyle{named}
\patchcmd{\subsection}{\bfseries}{\bfseries\boldmath}{}{}
\DeclareMathAlphabet{\mydutchcal}{U}{dutchcal}{m}{n}
\newcommand{\cA}{\mydutchcal{A}}
\newcommand{\cP}{\mydutchcal{P}}
\newcommand{\cD}{\mydutchcal{D}}
\newcommand{\cS}{\mydutchcal{S}}
\newcommand{\cU}{\mydutchcal{U}}
\newcommand{\even}{\mathsf{even}}
\newcommand{\UU}{\mathbf{U}}
\newcommand{\PDO}{\operatorname{PDO}}
\title[Convolutive sequences,~I]{Convolutive sequences,~I:~Through the lens of integer partition functions}
\author[S. Chern]{Shane Chern}
\address[S. Chern]{Fakult\"at f\"ur Mathematik, Universit\"at Wien, Oskar-Morgenstern-Platz 1, Wien 1090, Austria}
\email{chenxiaohang92@gmail.com, xiaohangc92@univie.ac.at}
\author[D. Eichhorn]{Dennis Eichhorn}
\address[D. Eichhorn]{Department of Mathematics, University of California Irvine, Irvine, CA 92697, USA}
\email{deichhor@math.uci.edu}
\author[S. Fu]{Shishuo Fu$^{\ast}$}
\address[S. Fu]{College of Mathematics and Statistics, Chongqing University \& Key Laboratory of Nonlinear Analysis and its Applications (Chongqing University), Ministry of Education, Chongqing 401331, P.R. China}
\email{fsshuo@cqu.edu.cn}
\thanks{$^{\ast}$Corresponding author.}
\author[J. A. Sellers]{James A. Sellers}
\address[J. A. Sellers]{Department of Mathematics, University of Minnesota Duluth, Duluth, MN 55811, USA}
\email{jsellers@d.umn.edu}
\date{}
\keywords{Convolutive sequences, eta-products, bijections, generating functions, integer partitions.}
\subjclass[2020]{05A17, 11P81.}
\begin{document}
	
\sloppy

\begin{abstract}
	Motivated by the convolutive behavior of the counting function for partitions with designated summands in which all parts are odd, we consider coefficient sequences $(a_n)_{n\ge 0}$ of primitive eta-products that satisfy the generic convolutive property
	\begin{align*}
		\sum_{n\ge 0} a_{mn} q^n = \left(\sum_{n\ge 0} a_n q^n\right)^m
	\end{align*}
	for a specific positive integer $m$. Given the results of an exhaustive search of the Online Encyclopedia of Integer Sequences for such sequences for $m$ up to $6$, we first focus on the case where $m=2$ with our attention mainly paid to the combinatorics of two $2$-convolutive sequences, featuring bijective proofs for both. For other $2$-convolutive sequences discovered in the OEIS, we apply generating function manipulations to show their convolutivity. We also give two examples of $3$-convolutive sequences. Finally, we discuss other convolutive series that are not eta-products.
\end{abstract}

\maketitle

\section{Introduction}

A \emph{partition} \cite{And1998} of a positive integer $n$ is a non-increasing sequence $\lambda$ of positive integers $\lambda_1 \geq \lambda_2 \geq \dots \geq \lambda_\ell$ such that $\lambda_1 + \lambda_2 + \dots + \lambda_\ell = n$.  We say that $n$ is the \emph{weight} (or \emph{size}) of the partition $\lambda$, which we will denote by $| \lambda |$, and these positive integers $\lambda_i$ are called \emph{parts} in this partition.  As an example, $5+2+2+2+1+1$ is a partition of weight $13$ with six parts.  At times, we will utilize an alternative shorthand notation for a partition, sometimes referred to as \emph{multiplicity notation}; for example, the partition above can be written as $(1^2, 2^3, 5^1)$ where the superscripts represent the corresponding multiplicities of each of the parts in the partition.  Note that, when we use this multiplicity notation, the part sizes will be written in the smallest-to-largest order as we read from left to right, and while they are often omitted, we occasionally use the superscript $0$ for part sizes not appearing in the partition.

In 2002, Andrews, Lewis, and Lovejoy \cite{ALL2002} introduced the combinatorial objects named \emph{partitions with designated summands}, which are partitions where exactly one part of each size in the partition is marked. For example, $5'+2+2'+2+1'+1$ is a partition of $13$ with designated summands where we have marked the only part of size five, the second part of size two, and the first part of size one.  Various aspects of these partitions with designated summands have been studied in the past; see, for example, \cite{ALL2002, Chen13, Shen, Xia}.  

In \cite{ALL2002}, the authors also considered partitions with designated summands in which all parts are odd, and they denoted the number of such objects of weight $n$ by the function $\PDO(n)$.  Since then, several others have focused attention on this counting function; see \cite{BO15, ChernSellers, FuSel, Sel2024_INTEGERS}. As has been recognized by several authors over the last two decades, the $\PDO$ function satisfies a very curious \emph{convolutive} property:
\begin{equation} \label{PDO2}
	\sum_{n\ge 0} \PDO(2n)q^n = \left ( \sum_{n\ge 0} \PDO(n)q^n \right )^2.
\end{equation}
In other words, the subsequence $(\PDO(2n))_{n\ge 0}$ is the convolution of the sequence $(\PDO(n))_{n\ge 0}$ with itself.  It is notable that \eqref{PDO2} is easily proven using the well-known generating function for $\PDO(n)$, as indicated by \cite[p.~52, eq.~(1.6) and p.~63, Theorem~21]{ALL2002}. Moreover, in a recent paper, Fu and Sellers~\cite[p.~7, Theorem~3.4]{FuSel} provided a refinement of this convolution by incorporating the contribution of two extra partition statistics.

To place relations of the same nature as \eqref{PDO2} into a broader setting, in the present work we focus on generically convolutive sequences.

\begin{definition}\label{def:mconv}
	Let $m\ge 2$ be an integer. A formal power series $\sum_{n\ge 0} a_n q^n$ is said to be \emph{$m$-convolutive} if
	\begin{align}\label{eq:mconv}
		\sum_{n\ge 0} a_{mn} q^n = \left(\sum_{n\ge 0} a_n q^n\right)^m.
	\end{align}
	In the meantime, we call the sequence $(a_n)_{n\ge 0}$ an \emph{$m$-convolutive sequence}.
\end{definition}

This paper evolved out of our attempts to \emph{combinatorially} understand the $2$-convolutivity of $\PDO(n)$. That is, our initial goal was to find a combinatorial proof of \eqref{PDO2}. Thus far, such a combinatorial proof remains elusive. However, our endeavors led us to consider the more general concept of convolutivity given in Definition~\ref{def:mconv} above, with the hope that we might identify other restricted integer partition functions that are themselves convolutive.

Indeed, an exhaustive search of the Online Encyclopedia of Integer Sequences~\cite{OEIS} identified a handful of convolutive sequences. Many of these convolutive sequences can be expressed as a product of Dedekind eta functions, which often appear as generating functions in the context of \emph{integer partitions}, thereby bearing  \emph{combinatorial} and \emph{arithmetic} meanings.

Let us define
\begin{align}\label{eq:eta-product}
	\sum_{n\ge 0} P_{m_1^{\delta_1}\cdots m_{J}^{\delta_J}}(n) q^n := \prod_{j=1}^J (q^{m_j};q^{m_j})_{\infty}^{\delta_j},
\end{align}
where $m_j$ are distinct positive integers sorted in ascending order, $\delta_j$ are integer exponents, and
\begin{align*}
	(a;q)_\infty := \prod_{k\ge 0} (1-aq^k)
\end{align*}
is the standard \emph{$q$-Pochhammer symbol}. Note that the \emph{Dedekind eta function} is defined as $\eta(\tau) := q^{\frac{1}{24}}(q;q)_\infty$, where $q:=e^{2\pi i\tau}$ with $\tau\in \mathbb{H}$, the upper half complex plane.

In addition, an eta-product taking the form of \eqref{eq:eta-product} is said to be \emph{primitive} if $\gcd(m_1, \ldots, m_J) = 1$. We remark that all eta-products involved in our OEIS search are assumed to be primitive.

In what follows, we write
\begin{align*}
	f_i := (q^i;q^i)_\infty
\end{align*}  
for notational convenience.  

Using the above notation, the well-known \emph{partition function} $p(n)$, which counts the number of partitions of $n$, can be rephrased as $P_{1^{-1}}(n)$ since its generating function is
\begin{align*}
	\sum_{n\ge 0} p(n) q^n = \frac{1}{(q;q)_\infty}.
\end{align*}
Also, according to \cite[p.~52, eq.~(1.6)]{ALL2002}, the $\PDO$ function can be alternatively written as $P_{1^{-1}3^{-1}4^{1}6^{2}12^{-1}}(n)$.

In Table~\ref{tab:list}, we list the $2$- and $3$-convolutive sequences in the OEIS~\cite{OEIS}, the last of which was submitted to the OEIS as part of this project by the present authors. These sequences form the basis of the present work.

\begin{table}[ht]
	\renewcommand{\arraystretch}{2.4} 
	\caption{A collection of $2$- or $3$-convolutive sequences in the OEIS}\label{tab:list}
	\begin{center}
		\begin{tabularx}{\textwidth}{@{}>{\centering\arraybackslash}X|c|c|>{\centering\arraybackslash}X@{}} 
			\hline
			OEIS Entry & Generating function & Reference & Convolutivity\\
			\hline
			\href{https://oeis.org/A007096}{A007096} & $\dfrac{f_2^6}{f_1^4f_4^2}$ & Thms.~\ref{thm:A007096} and \ref{th:A007096} & $2$-conv.\\
			\href{https://oeis.org/A103258}{A103258} & $\dfrac{f_2f_4^2}{f_1^2f_8}$ & Thms.~\ref{thm:A103258} and \ref{th:A103258} & $2$-conv.\\
			\href{https://oeis.org/A102186}{A102186} & $\dfrac{f_4f_6^2}{f_1f_3f_{12}}$ & Thm.~\ref{th:A102186}; cf.~Ref.~\cite{ALL2002} & $2$-conv.\\
			\href{https://oeis.org/A094023}{A094023} & $\dfrac{f_6f_{10}}{f_1f_{15}}$ & Thm.~\ref{th:A094023} & $2$-conv.\\
			\href{https://oeis.org/A128128}{A128128} & $\dfrac{f_2^3f_{3}}{f_1^3f_{6}}$ & Thm.~\ref{th:A128128} & $2$-conv.\\
			\href{https://oeis.org/A098151}{A098151} & $\dfrac{f_2f_3^2}{f_1^2f_6}$ & Thm.~\ref{th:A098151} & $3$-conv.\\
			\href{https://oeis.org/A385520}{A385520} & $\dfrac{f_2f_6^3}{f_1f_3f_4f_{12}}$ & Thm.~\ref{th:A293306} & $3$-conv.\\[10pt]
			\hline
		\end{tabularx}
	\end{center}
\end{table}

The remainder of this paper is organized as follows. In Section~\ref{sec:2cons}, we address the two ``naturally-occurring'' $2$-convolutive sequences that we understand combinatorially, and we give a bijective proof of the $2$-convolutivity of each. In Section~\ref{sec:more}, we prove the $2$-convolutivity of all sequences, including the two in Section~\ref{sec:2cons}, that we have discovered in the OEIS using generating function manipulations. Next, in Section~\ref{sec:3cons}, we give two examples of $3$-convolutive sequences. After presenting these convolutive sequences related to eta-products, we pause in Section~\ref{sec:artificial} to share some side comments about how ``artificial'' a generic convolutive sequence could be. In the meantime, we still succeed in interpreting one class of artificial convolutive sequences in a bijective manner. Finally, in Section~\ref{sec:conclusion}, we discuss our process for searching for and discovering convolutive sequences in the OEIS, and then close with questions for future consideration.

\section{Combinatorics of two 2-convolutive sequences}\label{sec:2cons}

In this section, we give \emph{bijective} proofs of the $2$-convolutivity of two sequences in the OEIS~\cite{OEIS}.
\begin{description}[itemsep=2pt]
	\item[A007096] This sequence enumerates pairs of \emph{overpartitions} (i.e., partitions where the first occurrence of each different part may be overlined; see \cite{CL2004}) into odd parts. Its generating function is
	\begin{align*}
		\sum_{n\ge 0} P_{1^{-4} 2^6 4^{-2}}(n) q^n = \frac{f_2^6}{f_1^4 f_4^2}.
	\end{align*}
	
	\item[A103258] This sequence enumerates pairs of partitions where the first partition contains only odd parts with each part size occurring at most three times, while the second partition may contain unrestricted parts with each part size also occurring at most three times. Its generating function is
	\begin{align*}
		\sum_{n\ge 0} P_{1^{-2} 2^1 4^{2} 8^{-1}}(n) q^n = \frac{f_2 f_4^2}{f_1^2 f_8}.
	\end{align*}
    We remark that the above combinatorial description is not given in the corresponding OEIS entry. However, it can be seen by rewriting the generating series as
    \begin{align*}
        \frac{f_2 f_4^2}{f_1^2 f_8} = \frac{f_2f_4}{f_1f_8}\cdot \frac{f_4}{f_1},
    \end{align*}
    while
    \begin{align*}
		\prod_{k\ge 1} (1+q^{2k-1}+q^{2(2k-1)}+q^{3(2k-1)}) = \frac{f_2f_4}{f_1f_8},
	\end{align*}
	and
	\begin{align*}
		\prod_{k\ge 1} (1+q^{k}+q^{2k}+q^{3k}) = \frac{f_4}{f_1}.
	\end{align*}
\end{description}

\subsection{Notation and lemmas}

To combinatorially prove the $2$-convolutivity of these two sequences, we set forth some definitions and notation.

\begin{definition}
	We introduce the following sets of partitions:
	\begin{itemize}[itemindent=*, leftmargin=*,itemsep=3pt]
		\item $\cP$: The set of \emph{unrestricted} partitions, i.e., partitions with no restrictions on their parts;
		
		\item $\cD$: The set of \emph{strict} partitions, i.e., partitions whose parts are distinct;
		
		\item $\cP3$: The set of partitions with each different part size occurring with multiplicity at most $3$;
		
		\item $\cP_{M,a}$: The set of partitions into parts congruent to $a$ modulo $M$;
		
		\item $\cD_{M,a}$: The set of partitions into distinct parts congruent to $a$ modulo $M$;
		
		\item $\cP3_{M,a}$: The set of partitions into parts congruent to $a$ modulo $M$ with multiplicity at most $3$;
		
		\item $\cP_{o} := \cP_{2,1}$; $\cP_{e} := \cP_{2,0}$;
		
		\item $\cD_{o} := \cD_{2,1}$; $\cD_{e} := \cD_{2,0}$;
		
		\item $\cP3_{o} := \cP3_{2,1}$; $\cP3_{e} := \cP3_{2,0}$;
		
		\item $\cS$: The ``set'' of \emph{squares} $\{s^2:s\in \mathbb{Z}\}$ where we assume that $s^2$ and $(-s)^2$ are different when $s\ne 0$.
	\end{itemize}
\end{definition}

In addition, let $\cU$ be a certain set of partitions or partition tuples. We define two operations on $\cU$.

\begin{definition}
	Let $c$ be a positive integer. We denote by $c\cU$ the set of partitions or partition tuples where each part in $\cU$ is multiplied by $c$.
\end{definition}

\begin{definition}
	We denote by $[\cU]_\even$ the subset of $\cU$ consisting of all members of an even weight.
\end{definition}

\begin{remark}\label{rmk:weight-halving}
	The latter operation in the above is especially useful in our context. Let $u(n)$ count the number of objects in $\cU$ with weight $n$. To show the sequence $(u(n))_{n\ge 0}$ is $2$-convolutive, it is sufficient to construct a weight-halving bijection from $[\cU]_\even$ to $\cU\times \cU$.
\end{remark}

For classical bijections in the realm of partitions, Pak has compiled an exhaustive survey \cite{Pak2006} which we use to our advantage.

Let us begin with a standard result of Glaisher~\cite[p.~163]{Gla1883}.

\begin{lemma}[Glaisher's bijection]\label{le:Glaisher-bij}
	Let $d\ge 2$ be an integer. There is a weight-preserving bijection between the set of partitions with no part divisible by $d$ and the set of partitions with parts occurring at most $d-1$ times.
\end{lemma}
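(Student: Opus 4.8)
The plan is to establish Glaisher's bijection by exhibiting an explicit, mutually inverse pair of maps between the two partition sets, keeping track of weight at each step. Let $d \ge 2$ be fixed, and let $\lambda$ be a partition with no part divisible by $d$. First I would take the \emph{forward map}: for each part size $a$ (necessarily $d \nmid a$) appearing with multiplicity $m_a$ in $\lambda$, I write $m_a$ in base $d$, say $m_a = \sum_{i \ge 0} c_i d^i$ with each digit $c_i \in \{0,1,\dots,d-1\}$, and then produce, for each $i$ with $c_i \ne 0$, exactly $c_i$ copies of the part $a \cdot d^i$. Collecting all such parts over all $a$ yields the image partition $\mu$. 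The weight contributed is $\sum_a m_a \cdot a = \sum_a a \sum_i c_i d^i = \sum_a \sum_i c_i (a d^i)$, so the total weight is preserved.

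Next I would verify that $\mu$ lands in the target set, i.e.\ every part of $\mu$ occurs at most $d-1$ times. The key observation is that a part of $\mu$ of a given size $b$ can only arise from a \emph{unique} pair $(a, i)$ with $a d^i = b$ and $d \nmid a$ (namely $d^i$ is the largest power of $d$ dividing $b$, and $a = b/d^i$), and from that source it is produced exactly $c_i$ times, where $c_i \le d-1$ is a single base-$d$ digit. Hence no part size can appear more than $d-1$ times, confirming $\mu$ is of the required type.

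For the \emph{inverse map}, I would start from a partition $\mu$ with every part occurring at most $d-1$ times. For each part size $b$ of $\mu$, write $b = a d^i$ with $d \nmid a$ (this decomposition is unique), and read off the multiplicity $c_i \in \{0,\dots,d-1\}$ of $b$ in $\mu$; then the recovered multiplicity of the part $a$ in $\lambda$ is $m_a = \sum_i c_i d^i$, where the sum runs over all $i$ such that $a d^i$ appears in $\mu$. Because the base-$d$ representation of a nonnegative integer is unique and the digits $c_i$ are exactly the digit constraints imposed by the multiplicity bound, these two maps are mutually inverse. I would then note that the inverse map produces only parts $a$ with $d \nmid a$, so its image lies in the correct set.

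The main obstacle—though it is more of a bookkeeping subtlety than a genuine difficulty—is confirming the \emph{bijectivity of the base-$d$/part-size correspondence}: that the assignment $b \mapsto (a, i)$ with $a = b/d^{v_d(b)}$ is a bijection between part sizes of $\mu$ and pairs $(a,i)$ with $d \nmid a$, and that under this correspondence the ``at most $d-1$ times'' condition on $\mu$ matches exactly the digit condition $c_i \in \{0,\dots,d-1\}$ in the base-$d$ expansion of $m_a$. Once this correspondence is pinned down, weight preservation and the inverse relationship follow immediately from the uniqueness of base-$d$ representations, so I would present that correspondence carefully and leave the arithmetic verifications as routine.
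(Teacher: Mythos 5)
Your proposal is correct and follows essentially the same route as the paper: the classical Glaisher map that expands each multiplicity $m_a$ in base $d$ and converts digit $c_i$ into $c_i$ copies of the part $a\,d^i$. The paper only sketches the forward map and cites Glaisher for the rest, whereas you additionally spell out the inverse via the unique factorization $b = a\,d^{i}$ with $d \nmid a$ — a welcome but not substantively different elaboration.
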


\begin{proof}
	Pak only recorded the bijection for the case where $d=2$ in \cite[p.~23, Sect.~3.2.1]{Pak2006}, while Glaisher's result was left as an exercise in \cite[p.~23, Sect.~3.2.3]{Pak2006}. However, this bijection was exhibited well in Glaisher's paper \cite[p.~162]{Gla1883}. To benefit the reader, we briefly reproduce the map from the former partition set to the latter without going into details of the proof. Let $\lambda = (1^{m_1},2^{m_2},\ldots)$ be a partition with no part divisible by $d$, written in multiplicity notation. Notice that parts that are multiples of $d$ will not appear by definition. For each $k$ not divisible by $d$, we represent $m_k$ in base $d$ as $m_k = r_hd^h + r_{h-1}d^{h-1} + \cdots + r_0d^0$, with each $0\le r_i< d$. We then map the $m_k$ parts of size $k$ in $\lambda$ to $r_i$ copies of parts of size $k d^i$ for each $i$. 
    The resulting partition thus has parts occurring at most $d-1$ times.
\end{proof}

Next, we recall \emph{Jacobi's triple product identity} \cite[p.~21, eq.~(2.2.10)]{And1998}:  
\begin{align*}
	\sum_{n=-\infty}^\infty z^n q^{n^2} = (-zq;q^2)_\infty(-z^{-1}q;q^2)_\infty(q^2;q^2)_\infty.
\end{align*}
There are many known direct bijective proofs, for example, due to Lewis \cite{Lew1984} and Vershik \cite{Ver1988}; see \cite[Sect.~6.2]{Pak2006} for Pak's survey. In particular, 
in \cite[Sect.~6.2.2]{Pak2006}, the following result is stated.

\begin{lemma}\label{lemma:theta}
	There is a bijection
	\begin{align*}
		\begin{array}{ccc}
			\cD_{o}\times \cD_{o} & \to & \cS\times \cP_{e}\\
			(\mu,\nu) & \mapsto & (s^2,\lambda)
		\end{array}
	\end{align*}
	such that
	\begin{align*}
		|\mu|+|\nu|=s^2+|\lambda|
	\end{align*}
	and that
	\begin{align*}
		s=\ell(\mu)-\ell(\nu),
	\end{align*}
	where $\ell(\pi)$ counts the number of parts in a partition $\pi$.
\end{lemma}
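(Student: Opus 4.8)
The plan is to read the statement as a bivariate refinement of the Jacobi triple product identity and to realize it as a composition of two elementary, well-known bijections. Attaching a variable $z$ that records the number of parts, one has $\sum_{\mu\in\cD_o} z^{\ell(\mu)}q^{|\mu|}=(-zq;q^2)_\infty$, and tracking $\ell(\nu)$ with $z^{-1}$ gives $(-z^{-1}q;q^2)_\infty$. Dividing the triple product by $(q^2;q^2)_\infty$, the generating function for $\cP_e$, turns the left-hand side into $\sum_{s\in\mathbb Z} z^s q^{s^2}$, the generating function for $\cS$ with $z$ marking $s$. Thus the identity to be lifted reads
\begin{align*}
	\sum_{(\mu,\nu)\in\cD_o\times\cD_o} z^{\ell(\mu)-\ell(\nu)}\, q^{|\mu|+|\nu|}
	=\Bigl(\sum_{s\in\mathbb Z} z^s q^{s^2}\Bigr)\frac{1}{(q^2;q^2)_\infty},
\end{align*}
and a bijection preserving both the $q$-exponent (weight) and the $z$-exponent ($s=\ell(\mu)-\ell(\nu)$) is exactly what is asserted.

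First I would strip the staircase. A strict partition into odd parts with $k$ parts is $\mu=(2a_1+1>\cdots>2a_k+1)$ with $a_1>\cdots>a_k\ge 0$; writing $a_i=\alpha_i+(k-i)$ produces an arbitrary partition $\alpha$ with $\ell(\alpha)\le k$ together with the weight relation $|\mu|=k^2+2|\alpha|$. Equivalently, subtracting the minimal odd partition $(1,3,\dots,2k-1)$ of weight $k^2$ and halving yields $\alpha$. This gives bijections $\mu\mapsto(k,\alpha)$ and $\nu\mapsto(l,\beta)$ with $k=\ell(\mu)$, $l=\ell(\nu)$, $\ell(\alpha)\le k$, $\ell(\beta)\le l$, and $|\mu|+|\nu|=k^2+l^2+2(|\alpha|+|\beta|)$.

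Next I would assemble a single partition via a Durfee rectangle. Setting $s:=k-l$, a direct computation gives $|\mu|+|\nu|-s^2=2\bigl(kl+|\alpha|+|\beta|\bigr)$, so it suffices to build from $(\alpha,\beta)$ an ordinary partition $\rho$ with $|\rho|=kl+|\alpha|+|\beta|$, then set $\lambda:=2\rho\in\cP_e$ and record $s$ with its sign. Here I invoke the shifted Durfee rectangle decomposition: a partition $\rho$ is equivalent to the data of its largest $\min(k,l)\times\max(k,l)$ rectangle (of weight $kl$) together with the partition to its right (at most $\min(k,l)$ rows) and the partition below it (parts at most $\max(k,l)$). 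When $k\ge l$ I place $\beta$ to the right and the conjugate $\alpha^{T}$ below; when $k<l$ I swap these roles. This is the combinatorial content of the identity $\frac{1}{(q;q)_\infty}=\sum_{n\ge0} q^{n(n+|s|)}/\bigl((q;q)_n(q;q)_{n+|s|}\bigr)$, and it is a bijection because the rectangle, the right block, and the block below are uniquely determined by $\rho$. Composing with the doubling $\rho\mapsto 2\rho$ and recording $s=k-l$ then gives the desired map, and one checks $s^2+|\lambda|=(k-l)^2+2|\rho|=|\mu|+|\nu|$ and $s=\ell(\mu)-\ell(\nu)$.

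The routine parts are the two generating-function verifications above; the one point demanding genuine care is the sign of $s$. Since $\cS$ treats $s^2$ and $(-s)^2$ as distinct for $s\ne0$, the map must remember whether $k\ge l$ or $k<l$, and the Durfee-rectangle step has to be set up symmetrically in the two cases so that it remains a clean bijection; in particular, the degenerate cases $k=l$ (a genuine Durfee square) and empty $\alpha$, $\beta$, or $\rho$ must be checked to behave correctly. This is the main thing I would expect to have to argue explicitly, as everything else reduces to the standard staircase and Durfee-rectangle bijections, for which Pak's survey \cite{Pak2006} can be cited.
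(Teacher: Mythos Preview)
Your construction is correct: stripping the odd staircase $(1,3,\dots,2k-1)$ from $\mu$ and $(1,3,\dots,2l-1)$ from $\nu$, then assembling an $|s|$-Durfee rectangle with the two residual partitions placed to the right and below (and finally doubling) is one of the classical bijective realizations of the triple product, and the weight and $s=\ell(\mu)-\ell(\nu)$ conditions are preserved as you verify. The only delicate point you flagged---tracking the sign of $s$ so that the $k\ge l$ and $k<l$ cases are handled consistently, with the $|s|$-Durfee rectangle uniquely recoverable from $\rho$---is exactly right, and the degenerate cases ($k=l$, empty $\alpha$, $\beta$, or $\rho$) work out.

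By way of comparison, the paper does not actually prove this lemma at all: it simply records the statement as a known result taken from Pak's survey \cite[Sect.~6.2.2]{Pak2006}, pointing to the bijective proofs of Lewis~\cite{Lew1984} and Vershik~\cite{Ver1988} for the triple product identity, and then explains why the generating-function identity matches the bijection. So you have supplied an explicit construction where the paper only gives a citation; your argument is essentially a staircase-plus-rectangle variant in the spirit of Wright's or Sylvester's proof, and it would serve as a self-contained replacement for the literature reference.
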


This bijection essentially shows Jacobi's triple product identity because the set on the left has the generating function
\begin{align*}
	\sum_{(\mu,\nu)\in \cD_{o}\times \cD_{o}}z^{\ell(\mu)-\ell(\nu)}q^{|\mu|+|\nu|} = (-zq;q^2)_\infty(-z^{-1}q;q^2)_\infty,
\end{align*}
while the set on the right has the generating function
\begin{align*}
	\sum_{(s^2,\lambda)\in \cS\times \cP_{e}}z^{s}q^{s^2+|\lambda|} = \frac{1}{(q^2;q^2)_\infty}\sum_{n=-\infty}^\infty z^n q^{n^2}.
\end{align*}

For the moment, we construct several more bijections for later use.

\begin{lemma}\label{lemma:splitP3o}
	There is a weight-preserving bijection from $\cP3_{o}$ to $\cD_{o} \times \cD_{4,2}$.
\end{lemma}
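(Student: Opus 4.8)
The plan is to exploit the base-two factorization
\[
1 + q^k + q^{2k} + q^{3k} = (1+q^k)(1+q^{2k}),
\]
valid for every $k$. Taking the product over all odd $k \ge 1$, the left-hand side is exactly the generating function for $\cP3_{o}$, while the right-hand side splits as $\prod_{k\ \mathrm{odd}}(1+q^k)\cdot\prod_{k\ \mathrm{odd}}(1+q^{2k})$; the first factor enumerates $\cD_{o}$ and the second enumerates $\cD_{4,2}$, since $2k$ ranges over the positive integers congruent to $2$ modulo $4$ as $k$ ranges over the odd positive integers. This identity both confirms the claim at the level of generating functions and, more usefully, reveals the combinatorial mechanism: each multiplicity should be split according to its binary digits.

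Concretely, I would define the forward map as follows. Given $\lambda \in \cP3_{o}$, let $m_k \in \{0,1,2,3\}$ be the multiplicity of the odd part $k$, and write its unique binary expansion $m_k = \epsilon_k + 2\delta_k$ with $\epsilon_k, \delta_k \in \{0,1\}$. Build $\mu$ by including the part $k$ exactly when $\epsilon_k = 1$, and build $\nu$ by including the part $2k$ exactly when $\delta_k = 1$. Then $\mu$ has distinct odd parts, so $\mu \in \cD_{o}$, while $\nu$ has distinct parts each congruent to $2$ modulo $4$, so $\nu \in \cD_{4,2}$. Weight preservation is immediate:
\[
|\lambda| = \sum_{k\ \mathrm{odd}} m_k\, k = \sum_{k\ \mathrm{odd}} \epsilon_k\, k + \sum_{k\ \mathrm{odd}} \delta_k\,(2k) = |\mu| + |\nu|.
\]

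For the inverse, given $(\mu,\nu) \in \cD_{o} \times \cD_{4,2}$, I would recover $\epsilon_k$ as the indicator of $k \in \mu$ and $\delta_k$ as the indicator of $2k \in \nu$ --- here using that every part of $\nu$ equals $2k$ for a unique odd $k$ --- and then assign the part $k$ multiplicity $\epsilon_k + 2\delta_k \le 3$ in $\lambda$. That the two maps are mutually inverse reduces entirely to the uniqueness of the two-digit binary representation of the integers $0,1,2,3$.

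Because the construction proceeds digit-by-digit and introduces no interaction between distinct part sizes, there is essentially no obstacle to overcome; the only points warranting a word of care are the two indexing correspondences $k \leftrightarrow k$ (odd to odd) and $k \leftrightarrow 2k$ (odd to $\equiv 2 \bmod 4$), both transparent. If one prefers, this can also be recognized as the $d=4$ instance of Glaisher's bijection (Lemma~\ref{le:Glaisher-bij}) restricted to odd parts, but the direct binary-digit description above is self-contained and keeps the weight bookkeeping manifest.
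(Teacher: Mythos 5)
Your proof is correct and is essentially identical to the paper's: the paper splits each multiplicity $m_k$ as $(m_k - 2\lfloor m_k/2\rfloor) + 2\lfloor m_k/2\rfloor$, which is exactly your binary decomposition $m_k = \epsilon_k + 2\delta_k$, sending the units digit to a part $k$ in $\cD_o$ and the twos digit to a part $2k$ in $\cD_{4,2}$. Your added details (the explicit inverse, the generating-function factorization, and the remark that this is Glaisher's bijection with $d=4$ restricted to odd parts) are all consistent with, and slightly more explicit than, the paper's presentation.
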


\begin{proof}
	Suppose 
	$(1^{m_1},3^{m_3}, \dots, K^{m_K}) \in \cP3_o$, where each $0 \leq m_i \leq 3$ and $K$ is odd. We map the partition $(1^{m_1},3^{m_3}, \dots, K^{m_K})$ bijectively to the pair
	\begin{align*}
		\big((1^{m_1 - 2\lfloor m_1/2 \rfloor},3^{m_3 - 2\lfloor m_3/2 \rfloor}, \dots, K^{m_K-2\lfloor m_K/2 \rfloor}), \ (2^{\lfloor m_1/2 \rfloor},6^{\lfloor m_3/2 \rfloor},\dots, 2K^{\lfloor m_K/2 \rfloor})\big),
	\end{align*}
	which is clearly in $\cD_{o} \times \cD_{4,2}$.
\end{proof}



\begin{lemma}\label{lemma:splitP}
	There is a weight-preserving bijection from $\cP$ to $\cP3 \times \cP_{4,0}$.
\end{lemma}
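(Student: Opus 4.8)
The plan is to exploit the generating-function factorization that makes the claim believable and then read off the corresponding bijection part-size by part-size. Since $\cP3$ has generating function $\prod_{k\ge 1}(1+q^k+q^{2k}+q^{3k}) = f_4/f_1$ and $\cP_{4,0}$ has generating function $\prod_{k\ge 1}(1-q^{4k})^{-1} = 1/f_4$, the product telescopes to $1/f_1$, the generating function for $\cP$. The key observation is that this factorization occurs independently at each part size: for a fixed $k$ one has $(1-q^k)^{-1} = (1+q^k+q^{2k}+q^{3k})(1-q^{4k})^{-1}$, which encodes the division-with-remainder decomposition $m = r + 4q$ of a multiplicity $m$ into its residue $r\in\{0,1,2,3\}$ and quotient $q\ge 0$ modulo $4$.

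Accordingly, I would define the forward map as follows. Given $\lambda\in\cP$ with multiplicity $m_k$ of each part $k\ge 1$, write $m_k = r_k + 4q_k$ with $r_k = m_k \bmod 4 \in \{0,1,2,3\}$ and $q_k = \lfloor m_k/4\rfloor$. Send $\lambda$ to the pair $(\alpha,\beta)$, where $\alpha$ retains $r_k$ copies of $k$, while $\beta$ collapses each group of four equal parts of size $k$ into a single part of size $4k$, so that $\beta$ contains $q_k$ copies of $4k$. By construction every part size in $\alpha$ occurs at most three times, so $\alpha\in\cP3$, whereas every part of $\beta$ is a multiple of $4$, so $\beta\in\cP_{4,0}$. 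Weight-preservation is immediate, since $\sum_k (r_k + 4q_k)k = \sum_k m_k k = |\lambda|$.

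To confirm this is a bijection I would simply write down the inverse: from $(\alpha,\beta)\in\cP3\times\cP_{4,0}$, read off $r_k$ as the multiplicity of $k$ in $\alpha$ and $q_k$ as the multiplicity of the part $4k$ in $\beta$, and reassemble $\lambda$ with multiplicity $m_k = r_k + 4q_k$. The one place where any bookkeeping is needed is checking that these two data streams do not interfere: a part of $\beta$ of size $j$ is a multiple of $4$ and hence determines a unique source index $k=j/4$, so the quotients $q_k$ are unambiguously recovered, and distinct source sizes $k$ produce distinct collapsed sizes $4k$ with no collisions. Injectivity and surjectivity then reduce to the existence and uniqueness of the quotient--remainder decomposition of each $m_k$ modulo $4$.

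This construction is essentially a single, unrestricted step of Glaisher's base-$4$ map (Lemma~\ref{le:Glaisher-bij}): the difference is that here I apply the multiplicity splitting at \emph{every} part size rather than only at those not divisible by $4$, and I do not iterate, so that the ``overflow'' parts are deposited into the separate factor $\cP_{4,0}$ instead of being folded back into the same partition. I expect no genuine obstacle; the only care point is verifying that the step replacing four parts of size $k$ by one part of size $4k$ is reversible across all part sizes simultaneously, which the residue/quotient description above makes transparent.
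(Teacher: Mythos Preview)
Your argument is correct, and it is a genuinely different bijection from the one the paper uses. The paper first separates $\cP$ into residue classes modulo $4$, writing $\cP = \cP_{4,1}\times\cP_{4,2}\times\cP_{4,3}\times\cP_{4,0}$, keeps the factor $\cP_{4,0}$ untouched, and then applies the full iterated Glaisher map with $d=4$ (Lemma~\ref{le:Glaisher-bij}) to send $\cP_{4,1}\times\cP_{4,2}\times\cP_{4,3}$ to $\cP3$. In particular, the paper's $\cP_{4,0}$-component consists of the \emph{original} parts of $\lambda$ that are divisible by $4$, whereas in your map the $\cP_{4,0}$-component is built from \emph{collapsed} blocks of four equal parts. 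For instance, $\lambda=(4)$ is sent by the paper to $(\varnothing,(4))$ and by you to $((4),\varnothing)$, while $\lambda=(1^4)$ is sent by the paper to $((4),\varnothing)$ and by you to $(\varnothing,(4))$. Your construction has the virtue of being a single non-iterative step that works uniformly at every part size, so the inverse is immediate from uniqueness of quotient and remainder; the paper's version is slightly more indirect but has the expository advantage of reducing the lemma to a black-box invocation of the already-proved Glaisher bijection.
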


\begin{proof}
	Separating the parts from partitions in $\cP$ according to their residue modulo $4$, we have that $\cP = \cP_{4,1} \times \cP_{4,2} \times\cP_{4,3} \times\cP_{4,0}$. By Glaisher's bijection in Lemma~\ref{le:Glaisher-bij} with $d=4$, we have that $\cP_{4,1} \times \cP_{4,2} \times\cP_{4,3}$ is bijectively equivalent to $\cP3$, which completes the proof.
\end{proof}

\begin{lemma}\label{lemma:sstt}
	There is a weight-halving bijection from $[\cS\times \cS]_\even$ to $\cS\times \cS$.
\end{lemma}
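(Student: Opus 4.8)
The plan is to first translate the statement into the language of lattice points. By the convention that $s^2$ and $(-s)^2$ are distinct elements of $\cS$ whenever $s\ne 0$, the set $\cS$ is in weight-preserving bijection with $\mathbb{Z}$ via $s\leftrightarrow s^2$, so that $\cS\times\cS$ is identified with $\mathbb{Z}^2$, where the lattice point $(s,t)$ carries weight $s^2+t^2$. Under this identification $[\cS\times\cS]_\even$ consists precisely of those $(s,t)$ with $s^2+t^2$ even; since a square is even if and only if its base is even, this is exactly the set of $(s,t)$ with $s\equiv t\pmod 2$.

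The key step is the classical ``rotation by $45^\circ$'' parametrization. I would define
\[
\phi(s,t)=\left(\frac{s+t}{2},\ \frac{s-t}{2}\right).
\]
The congruence $s\equiv t\pmod 2$ guarantees that both coordinates are integers, so $\phi(s,t)\in\mathbb{Z}^2=\cS\times\cS$. The weight is halved because
\[
\left(\frac{s+t}{2}\right)^2+\left(\frac{s-t}{2}\right)^2=\frac{(s+t)^2+(s-t)^2}{4}=\frac{s^2+t^2}{2}.
\]
Thus a pair of total weight $2n$ is sent to a pair of total weight $n$, as required.

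To see that $\phi$ is a bijection, I would exhibit the inverse explicitly as $\psi(u,v)=(u+v,\,u-v)$. For any $(u,v)\in\mathbb{Z}^2$ the image $\psi(u,v)=(s,t)$ satisfies $s-t=2v$, hence $s\equiv t\pmod 2$, so $\psi$ does land in $[\cS\times\cS]_\even$; and the identities $(u+v)+(u-v)=2u$ and $(u+v)-(u-v)=2v$ show that $\phi\circ\psi$ and $\psi\circ\phi$ are both the identity.

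I do not expect any genuine obstacle here: the construction rests on the elementary identity $s^2+t^2=2\bigl((\tfrac{s+t}{2})^2+(\tfrac{s-t}{2})^2\bigr)$, which is precisely the statement that multiplication by $1+i$ scales Gaussian-integer norms by $2$. The only point requiring care is bookkeeping the sign convention on $\cS$, namely verifying that $\phi$ respects the labelling of $\cS$ by all of $\mathbb{Z}$ (with $0$ counted once and $\pm s$ counted separately for $s\ne 0$), so that the domain and codomain are genuinely $\mathbb{Z}^2$ rather than a folded quotient; the explicit inverse $\psi$ makes this transparent.
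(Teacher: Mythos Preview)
Your proof is correct and takes essentially the same approach as the paper: the paper defines exactly the same map $t_1=\tfrac{1}{2}(s_1+s_2)$, $t_2=\tfrac{1}{2}(s_1-s_2)$ and notes that membership in $[\cS\times\cS]_\even$ forces $s_1\equiv s_2\pmod 2$. Your version is in fact more thorough, since you explicitly verify the weight-halving identity and exhibit the inverse $\psi(u,v)=(u+v,u-v)$, details the paper leaves implicit.
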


\begin{proof}
	The following map
	\begin{align*}
		\begin{array}{ccc}
			[\cS\times \cS]_\even & \to & \cS\times \cS\\[6pt]
			(s_1^2,s_2^2) & \mapsto & (t_1^2,t_2^2)
		\end{array}
	\end{align*}
	given by
	\begin{align*}
		\begin{cases}
			t_1=\frac{1}{2}(s_1+s_2)\\
			t_2=\frac{1}{2}(s_1-s_2)
		\end{cases}
	\end{align*}
	is as claimed. Here we need to notice that $s_1$ and $s_2$ should have the same parity so as to ensure that $s_1^2+s_2^2$ is even and hence the pair $(s_1^2,s_2^2)$ belongs to $[\cS\times \cS]_\even$.
\end{proof}

\subsection{Sequence A007096}

We now give the desired bijective proof of the $2$-convolutivity of the OEIS sequence A007096, which is given by
\begin{align*}
	\sum_{n\ge 0} P_{1^{-4} 2^6 4^{-2}}(n) q^n = \frac{f_2^6}{f_1^4 f_4^2}.
\end{align*}
Note that the eta-product here can be rewritten as
\begin{align*}
	\frac{f_2^6}{f_1^4 f_4^2} = \frac{f_2^2}{f_1f_4} \cdot \frac{f_2^2}{f_1f_4} \cdot \frac{f_2}{f_1}\cdot \frac{f_2}{f_1} = (-q;q^2)_\infty\cdot  (-q;q^2)_\infty\cdot  (-q;q)_\infty\cdot  (-q;q)_\infty,
\end{align*}
which is clearly the generating function for $\cD_{o} \times \cD_{o} \times \cD \times \cD$.

\begin{theorem}\label{thm:A007096}
	There is a weight-halving bijection from
	\begin{align*}
		[\cD_{o} \times \cD_{o} \times \cD \times \cD]_\even
	\end{align*}
	to
	\begin{align*}
		\cD_{o} \times \cD_{o} \times \cD_{o} \times \cD_{o} \times \cD \times \cD \times \cD \times \cD.
	\end{align*}
	As a consequence, the numbers $P_{1^{-4} 2^6 4^{-2}}(n)$ form a $2$-convolutive sequence.
\end{theorem}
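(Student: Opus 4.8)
The plan is to invoke Remark~\ref{rmk:weight-halving}: writing $\cU := \cD_o \times \cD_o \times \cD \times \cD$, it suffices to produce a weight-halving bijection from $[\cU]_\even$ to $\cU \times \cU$, and after reordering factors $\cU \times \cU = \cD_o^4 \times \cD^4$ is precisely the target set. My guiding principle is that the weight-halving should come solely from the square-halving map of Lemma~\ref{lemma:sstt}, so I first reshape $\cU$ into the form $\cS \times \cS \times (\text{even-weight factors})$, with the total-weight parity controlled only by the two squares. Splitting each strict partition according to the parity of its parts gives $\cD \cong \cD_o \times \cD_e$, so that $\cU \cong \cD_o^4 \times \cD_e^2$ weight-preservingly. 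The emergence of four copies of $\cD_o$ is the key bookkeeping point: applying the Jacobi-triple-product bijection of Lemma~\ref{lemma:theta} to two disjoint pairs converts $\cD_o^4$ into $\cS^2 \times \cP_e^2$, whence $\cU \cong \cS^2 \times \cP_e^2 \times \cD_e^2$. Every factor except $\cS^2$ now carries only even weight, so the even-weight restriction localizes entirely onto the squares: $[\cU]_\even \cong [\cS \times \cS]_\even \times \cP_e^2 \times \cD_e^2$.

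Next I would halve all weights at once. On the two squares I apply the weight-halving map of Lemma~\ref{lemma:sstt}; on each remaining factor, halving is the trivial operation of dividing every part by $2$, since $\cP_e = 2\cP$ and $\cD_e = 2\cD$. This produces, with weights halved, the set $\cS^2 \times \cP^2 \times \cD^2$. A routine generating-function check confirms that this set and $\cU \times \cU$ share the series $f_2^{12}/(f_1^8 f_4^4)$, so it remains only to exhibit a weight-preserving bijection between them.

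For this reassembly I would run the earlier construction backwards, inserting one extra move. Splitting each unrestricted partition by parity gives $\cP \cong \cP_e \times \cP_o$, and Glaisher's bijection (Lemma~\ref{le:Glaisher-bij} with $d = 2$, which is Euler's theorem) identifies $\cP_o \cong \cD$; hence $\cP^2 \times \cD^2 \cong \cP_e^2 \times \cD^4$, and our set becomes $\cS^2 \times \cP_e^2 \times \cD^4$. Applying the inverse of the theta bijection of Lemma~\ref{lemma:theta} to the two pairs $\cS \times \cP_e$ recovers $\cD_o^4$, giving $\cD_o^4 \times \cD^4 = \cU \times \cU$. Composing these weight-preserving stages with the single weight-halving stage produces the claimed bijection, and the $2$-convolutivity of $P_{1^{-4} 2^6 4^{-2}}(n)$ then follows by Remark~\ref{rmk:weight-halving}.

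I expect the crux to be conceptual rather than computational: one must break the two strict partitions into their odd and even halves precisely so that exactly four copies of $\cD_o$ appear, producing exactly the two squares that Lemma~\ref{lemma:sstt} consumes, after which the weight parity depends only on those squares. The second delicate point is the reassembly, where the unrestricted partitions left behind by the halving must be reconciled with the $\cP_e$ factors demanded by the inverse theta map; this is exactly what the extra Euler/Glaisher step $\cP_o \cong \cD$ accomplishes. Once these two alignments are in place, every stage is a direct appeal to a lemma already established above, so no genuinely new bijective idea should be required.
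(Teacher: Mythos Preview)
Your proposal is correct and follows essentially the same route as the paper's proof: split $\cD = \cD_o \times \cD_e$ to obtain $\cD_o^4 \times \cD_e^2$, apply Lemma~\ref{lemma:theta} twice to reach $\cS^2 \times \cP_e^2 \times \cD_e^2$, localize the parity constraint onto $[\cS \times \cS]_\even$, halve via Lemma~\ref{lemma:sstt} and part-halving to get $\cS^2 \times \cP^2 \times \cD^2$, and then reassemble using $\cP = \cP_e \times \cP_o$, the inverse of Lemma~\ref{lemma:theta}, and Glaisher's $\cP_o \cong \cD$. The only cosmetic difference is that you apply Glaisher before the inverse theta step rather than after, which is immaterial since they act on disjoint factors.
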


\begin{proof}
	Consider 
	\begin{align*}
		[\cD_{o} \times \cD_{o} \times \cD \times \cD]_\even.
	\end{align*}
	By separating the parts from partitions in $\cD$ according to their parity, we have that $\cD = \cD_o \times \cD_e$. Thus $[\cD_{o} \times \cD_{o} \times \cD \times \cD]_\even$ is in bijection with $[\cD_{o} \times \cD_{o} \times \cD_o \times \cD_o \times \cD_e \times \cD_e]_\even$. Applying Lemma~\ref{lemma:theta} twice, we see that this is further in bijection with $[\cS \times \cS \times \cP_e \times \cP_e \times \cD_{e} \times \cD_{e}]_\even$.
	Notice that for an element of $\cS \times \cS$ to contribute to $[\cS \times \cS \times \cP_e \times \cP_e \times \cD_{e} \times \cD_{e}]_\even$, it must actually be in $[\cS \times \cS]_\even$, so that in fact,
	\begin{align*}
		&[\cS \times \cS \times \cP_e \times \cP_e \times \cD_{e} \times \cD_{e}]_\even\\
		&\qquad = [[\cS \times \cS]_\even \times \cP_e \times \cP_e \times \cD_{e} \times \cD_{e}]_\even.
	\end{align*}
	
	Now, by applying Lemma~\ref{lemma:sstt} to $[\cS \times \cS]_\even$ and halving each part in $\cP_e \times \cP_e \times \cD_{e} \times \cD_{e}$, we have that $[\cD_{o} \times \cD_{o} \times \cD \times \cD]_\even$ is in (a weight-halving) bijection with $\cS \times \cS \times \cP \times \cP \times \cD \times \cD$. Separating the parts from partitions in $\cP$ according to their parity, we have that this is in bijection with $\cS \times \cS \times \cP_e \times \cP_e \times \cP_o \times \cP_o \times \cD \times \cD$. Applying Lemma~\ref{lemma:theta} twice, we see that this is in bijection with $\cD_o \times \cD_o \times \cD_o \times \cD_o \times \cP_o \times \cP_o \times \cD \times \cD$. Finally, applying Glaisher's bijection twice in Lemma~\ref{le:Glaisher-bij} with $d=2$, i.e., the bijection between $\cP_o$ and $\cD$, we see that this is in bijection with
	\begin{align*}
		\cD_o \times \cD_o \times \cD_o \times \cD_o \times \cD \times \cD \times \cD \times \cD,
	\end{align*}
	as desired.
	
	Finally, we conclude the $2$-convolutivity of the sequence $(P_{1^{-4} 2^6 4^{-2}}(n))_{n\ge 0}$ in light of Remark~\ref{rmk:weight-halving}.
\end{proof}

\subsection{Sequence A103258}

In this subsection, we look at the OEIS sequence A103258, which is given by
\begin{align*}
	\sum_{n\ge 0} P_{1^{-2} 2^1 4^{2} 8^{-1}}(n) q^n = \frac{f_2 f_4^2}{f_1^2 f_8}.
\end{align*}
The above eta-product can be rewritten as
\begin{align*}
	\frac{f_2 f_4^2}{f_1^2 f_8} = \frac{(q^4;q^8)_\infty}{(q;q^2)_\infty}\cdot \frac{(q^4;q^4)_\infty}{(q;q)_\infty},
\end{align*}
and as we have argued, it gives the generating function for $\cP3_{o} \times \cP3$.

\begin{theorem}\label{thm:A103258}
	There is a weight-halving bijection from
	\begin{align*}
		[\cP3_{o} \times \cP3]_\even
	\end{align*}
	to
	\begin{align*}
		\cP3_{o} \times \cP3_{o} \times \cP3 \times \cP3.
	\end{align*}
	As a consequence, the numbers $P_{1^{-2} 2^1 4^{2} 8^{-1}}(n)$ form a $2$-convolutive sequence.
\end{theorem}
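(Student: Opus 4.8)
The plan is to mimic the strategy used for Theorem~\ref{thm:A007096}, using the weight-halving reduction of Remark~\ref{rmk:weight-halving} together with the structural lemmas proved above. Since the generating function factors as the one for $\cP3_o \times \cP3$, I want to transform $[\cP3_o \times \cP3]_\even$ into $\cP3_o \times \cP3_o \times \cP3 \times \cP3$ by a composition of bijections, at least one of which halves the weight.

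First I would unpack both factors into distinct-part data using the lemmas at hand. Applying Lemma~\ref{lemma:splitP3o} to the $\cP3_o$ factor gives $\cD_o \times \cD_{4,2}$, and applying Lemma~\ref{lemma:splitP} to the $\cP3$ factor gives $\cP \times (\text{something})$—more precisely, since $\cP3$ is what Lemma~\ref{lemma:splitP} produces \emph{from} $\cP$, I would instead run that lemma in reverse to identify $\cP3$ with $\cP$ modulo a $\cP_{4,0}$ factor, so as to bring in fully unrestricted partitions. The goal of this first stage is to rewrite $[\cP3_o \times \cP3]_\even$ as the even-weight part of a product in which the ``odd distinct'' pieces $\cD_o$ appear explicitly, since those are exactly what Lemma~\ref{lemma:theta} converts into squares.

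Next I would feed the $\cD_o \times \cD_o$ pairs (one arising from the $\cP3_o$ factor, and a second that I would manufacture by separating the $\cP$ factor into odd and even parts and then applying Glaisher's bijection Lemma~\ref{le:Glaisher-bij} with $d=2$ to turn $\cP_o$ into $\cD$ and thence extract $\cD_o$) into Lemma~\ref{lemma:theta}, producing a factor $\cS \times \cS$. As in the A007096 argument, I would then localize the parity constraint onto this square factor, writing the relevant even-weight set as $[\cS \times \cS]_\even$ times the remaining (now freely recombinable) pieces, and apply the square-halving bijection Lemma~\ref{lemma:sstt} while simultaneously halving every part in the even-residue components. This is the single step that divides the total weight by two and thus supplies the ``weight-halving'' property demanded by Remark~\ref{rmk:weight-halving}. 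Running Lemma~\ref{lemma:theta} backward twice after the halving re-creates two fresh $\cD_o \times \cD_o$ pairs, and finally reassembling via Lemmas~\ref{lemma:splitP3o} and~\ref{lemma:splitP} (and Glaisher) should repackage everything as $\cP3_o \times \cP3_o \times \cP3 \times \cP3$.

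The main obstacle I anticipate is bookkeeping the residue classes correctly through the halving step. The delicate point is that halving sends parts of a given residue modulo $4$ or $8$ to parts of a new residue, so the sets $\cD_{4,2}$, $\cP_{4,0}$, and the multiplicity-$\le 3$ conditions must be tracked so that, after dividing by two, they recombine into exactly the restricted classes appearing in $\cP3_o$ and $\cP3$ rather than some neighboring class. Verifying that the parity/divisibility constraints match up on both sides—so that the map is genuinely a bijection and not merely an injection with a weight drop—will require care, especially in confirming (as in Lemma~\ref{lemma:sstt}) that the parity of the two square-roots forces the correct congruence conditions and that the multiplicity-at-most-$3$ restriction is preserved under the Glaisher-type passages. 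Once the residue arithmetic is pinned down, the remaining steps are routine compositions of the lemmas.
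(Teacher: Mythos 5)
Your plan is modeled on the A007096 argument, but that template does not fit here, and the mismatch creates two concrete gaps. First, a counting problem: after splitting $\cP3 = \cP3_o \times \cP3_e$ by parity and applying Lemma~\ref{lemma:splitP3o} to each $\cP3_o$ factor, the set $[\cP3_o \times \cP3]_\even$ yields exactly \emph{two} $\cD_o$ factors (each $\cP3_o$ contributes one $\cD_o$ and one $\cD_{4,2}$); every remaining factor ($\cD_{4,2}$, $\cD_{4,2}$, $\cP3_e$) consists of even parts. So Lemma~\ref{lemma:theta} can be applied only once, producing a single $\cS$ factor, never $\cS \times \cS$. Your plan to form $[\cS\times\cS]_\even$ and invoke Lemma~\ref{lemma:sstt} therefore cannot get off the ground: in A007096 the four $\cD_o$ factors (from $\cD_o\times\cD_o\times\cD\times\cD$) supply two theta pairs, but here there is only one. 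Second, your proposed source for extra odd-distinct material --- ``running Lemma~\ref{lemma:splitP} in reverse to identify $\cP3$ with $\cP$ modulo a $\cP_{4,0}$ factor'' --- is not a bijection. Lemma~\ref{lemma:splitP} maps $\cP \to \cP3 \times \cP_{4,0}$, so inverting it requires a $\cP_{4,0}$ companion that you simply do not have before the halving step; you cannot trade a $\cP3$ factor for a $\cP$ factor on its own.

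The paper's proof resolves the single-square situation differently, and this is the idea your proposal is missing: since all factors other than $\cS$ have even parts, evenness of the total weight forces the square $s^2$ itself to be even, hence $s$ is even and $s^2 \in 4\cS$. Halving all parts then sends $4\cS \to 2\cS$, $\cP_e \to \cP$, $\cD_{4,2} \to \cD_o$, and $\cP3_e \to \cP3$, giving $2\cS \times \cP \times \cD_o \times \cD_o \times \cP3$. Only \emph{now} does Lemma~\ref{lemma:splitP} enter, in the forward direction, converting the newly created $\cP$ into $\cP3 \times \cP_{4,0}$; the key regrouping $2\cS \times \cP_{4,0} = 2(\cS \times \cP_e)$ then allows Lemma~\ref{lemma:theta} to run backwards \emph{inside the dilation by} $2$, producing $\cD_{4,2} \times \cD_{4,2}$, which recombines with the two $\cD_o$ factors via the inverse of Lemma~\ref{lemma:splitP3o} to give $\cP3_o \times \cP3_o \times \cP3 \times \cP3$. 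Your instinct that the residue bookkeeping through the halving step is the delicate point is right, but the fix is the divisibility-by-$4$ observation on one square together with the post-halving use of Lemma~\ref{lemma:splitP}, not Lemma~\ref{lemma:sstt}.
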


\begin{proof}
	Consider
	\begin{align*}
		[\cP3_{o} \times \cP3]_\even.
	\end{align*}
	By separating the parts from partitions in $\cP3$ according to their parity, we have that $\cP3 = \cP3_o \times \cP3_e$.
	Thus $[\cP3_{o} \times \cP3]_\even$ is in bijection with $[\cP3_{o} \times \cP3_o \times \cP3_e]_\even$. Applying Lemma~\ref{lemma:splitP3o} twice, we see that this is in bijection with $[\cD_{o} \times \cD_{o} \times \cD_{4,2} \times \cD_{4,2} \times \cP3_e]_\even$. Applying Lemma~\ref{lemma:theta}, we see that this is in bijection with $[\cS \times \cP_e \times \cD_{4,2} \times \cD_{4,2} \times \cP3_e]_\even$. Now notice that tuples of even size in $\cS \times \cP_e \times \cD_{4,2} \times \cD_{4,2} \times \cP3_e$ require that our elements of $\cS$ are even, and so
	\begin{align*}
		&[\cS \times \cP_e \times \cD_{4,2} \times \cD_{4,2} \times \cP3_e]_\even\\
		&\qquad = [4\cS \times \cP_e \times \cD_{4,2} \times \cD_{4,2} \times \cP3_e]_\even.
	\end{align*}
	
	Now, by halving each part, we have that $[\cP3_{o} \times \cP3]_\even$ is in (a weight-halving) bijection with $2\cS \times \cP \times \cD_o \times \cD_o \times \cP3$. Applying Lemma~\ref{lemma:splitP}, we see that this is in bijection with $2\cS \times \cP_{4,0} \times \cD_o \times \cD_o \times \cP3 \times \cP3$. Since $\cP_{4,0} = 2\cP_e$, we may apply Lemma~\ref{lemma:theta} to see that this is in bijection with
	\begin{align*}
		&2(\cD_o \times \cD_o) \times \cD_o \times \cD_o \times \cP3 \times \cP3\\
		&\qquad = \cD_{4,2} \times \cD_{4,2} \times \cD_o \times \cD_o \times \cP3 \times \cP3.
	\end{align*}
	Applying Lemma~\ref{lemma:splitP3o} twice, we see that this is in bijection with
	\begin{align*}
		\cP3_o \times \cP3_o \times \cP3 \times \cP3,
	\end{align*}
	as desired.
\end{proof}

\section{More 2-convolutive sequences}\label{sec:more}

As mentioned earlier, we have identified additional $2$-convolutive sequences in the OEIS~\cite{OEIS} for which we have yet to obtain combinatorial proofs. We hereby discuss the $2$-convolutivity of these sequences and provide straightforward \emph{analytic} proofs. We include here the analytic proofs of the $2$-convolutivity of the two sequences from Section~\ref{sec:2cons} as well, for the sake of completeness.
\begin{description}[itemsep=2pt]
	\item[A102186] This sequence is the $\PDO$ function, which enumerates partitions with designated summands wherein all parts are odd. Its generating function is
	\begin{align*}
		\sum_{n\ge 0} P_{1^{-1}3^{-1}4^{1}6^{2}12^{-1}}(n) q^n = \frac{f_4 f_6^2}{f_1 f_3 f_{12}}.
	\end{align*}
	
	\item[A094023] This sequence enumerates triples of partitions where the first partition contains no multiples of $3$ or $5$, the second partition is into distinct multiples of $3$, and the third partition is into distinct multiples of $5$. Its generating function is
	\begin{align*}
		\sum_{n\ge 0} P_{1^{-1} 6^1 10^{1} 15^{-1}}(n) q^n = \frac{f_6 f_{10}}{f_1 f_{15}}.
	\end{align*}
    We remark that the above combinatorial description is not given in the corresponding OEIS entry. However, it can be seen by rewriting the generating series as
	\begin{align*}
		\frac{f_6 f_{10}}{f_1 f_{15}} = \frac{f_3 f_5}{f_1 f_{15}} \cdot \frac{f_6}{f_3} \cdot \frac{f_{10}}{f_5}.
	\end{align*}

    \item[A128128] This sequence enumerates triples of partitions where the first partition contains no multiples of $2$ or $3$, and the second and third partitions are into distinct parts. Its generating function is
	\begin{align*}
		\sum_{n\ge 0} P_{1^{-3} 2^3 3^{1} 6^{-1}}(n) q^n = \frac{f_2^3 f_{3}}{f_1^3 f_{6}}.
	\end{align*}
    We remark that the above combinatorial description is again not given in the corresponding OEIS entry. However, it can be seen by rewriting the generating series as
	\begin{align*}
		\frac{f_2^3 f_{3}}{f_1^3 f_{6}} = \frac{f_2 f_3}{f_1f_6}  \cdot \frac{f_2}{f_1} \cdot \frac{f_2}{f_1}.
	\end{align*}   
\end{description}


\subsection{2-Dissection identities}

In this subsection, we collect several \emph{$2$-dissection} formulas for $q$-series. 
Most of these results are recorded in Hirschhorn's monograph~\cite{Hir}.  

\begin{lemma}
	\begin{align}
		\frac{1}{f_1^2}&=\frac{f_8^5}{f_2^5f_{16}^2}+2q\frac{f_4^2f_{16}^2}{f_2^5f_8},\label{eq:f1-2}\\
		\frac{1}{f_1^4}&=\frac{f_4^{14}}{f_2^{14}f_8^4}+4q\frac{f_4^2f_8^4}{f_2^{10}}.\label{eq:f1-4}
	\end{align}
\end{lemma}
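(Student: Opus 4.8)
The plan is to reduce both identities to a single classical two-dissection of Ramanujan's theta function $\varphi$, together with the standard eta-quotient evaluations of $\varphi$ and $\psi$. Recall that $\varphi(q) := \sum_{n=-\infty}^{\infty} q^{n^2} = f_2^5/(f_1^2 f_4^2)$ and $\psi(q) := \sum_{n\ge 0} q^{n(n+1)/2} = f_2^2/f_1$ (see \cite{And1998} or \cite{Hir}). The crucial observation is that $1/f_1^2$ and $1/f_1^4$ are, up to an explicit even-indexed eta-quotient prefactor, powers of $\varphi(q)$: indeed $1/f_1^2 = (f_4^2/f_2^5)\,\varphi(q)$ and $1/f_1^4 = (f_4^4/f_2^{10})\,\varphi(q)^2$. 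Since these prefactors involve only $f_2$ and $f_4$, dissecting $\varphi(q)$ and $\varphi(q)^2$ according to the parity of the exponent of $q$ will immediately separate each right-hand side into its even and odd parts.

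First I would establish the elementary dissection
\begin{align*}
  \varphi(q) = \varphi(q^4) + 2q\,\psi(q^8),
\end{align*}
by splitting the defining sum $\sum_n q^{n^2}$ according to the parity of $n$: the even indices $n=2m$ contribute $\sum_m q^{4m^2} = \varphi(q^4)$, while the odd indices $n=2m+1$ contribute $q\sum_m q^{4m(m+1)} = 2q\,\psi(q^8)$, the factor $2$ arising because $m$ and $-m-1$ yield the same triangular exponent. Substituting the eta-quotient forms $\varphi(q^4) = f_8^5/(f_4^2 f_{16}^2)$ and $\psi(q^8) = f_{16}^2/f_8$, and multiplying through by $f_4^2/f_2^5$, directly produces
\begin{align*}
  \frac{1}{f_1^2} = \frac{f_4^2}{f_2^5}\left(\frac{f_8^5}{f_4^2 f_{16}^2} + 2q\frac{f_{16}^2}{f_8}\right) = \frac{f_8^5}{f_2^5 f_{16}^2} + 2q\frac{f_4^2 f_{16}^2}{f_2^5 f_8},
\end{align*}
which is \eqref{eq:f1-2}.

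For \eqref{eq:f1-4} I would square the dissection of $\varphi$, separating the terms that are even and odd in $q$:
\begin{align*}
  \varphi(q)^2 = \bigl(\varphi(q^4)^2 + 4q^2\psi(q^8)^2\bigr) + 4q\,\varphi(q^4)\psi(q^8).
\end{align*}
The odd part is already an eta-quotient, since $\varphi(q^4)\psi(q^8) = f_8^4/f_4^2$; multiplying by $f_4^4/f_2^{10}$ turns it into $4q\,f_4^2 f_8^4/f_2^{10}$, the second summand of \eqref{eq:f1-4}. The even part is collapsed using the classical two-dissection $\varphi(q^2)^2 = \varphi(q^4)^2 + 4q^2\psi(q^8)^2$ (which is $\varphi(q)^2 = \varphi(q^2)^2 + 4q\psi(q^4)^2$ with $q\mapsto q^2$, recorded in \cite{Hir}; it also follows from the same parity-splitting applied to $\varphi(q)^2 = \sum_{m,n}q^{m^2+n^2}$ via the substitution $u=(m+n)/2$, $v=(m-n)/2$ on the terms with $m\equiv n\pmod 2$). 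Using $\varphi(q^2)^2 = f_4^{10}/(f_2^4 f_8^4)$, the even part becomes $(f_4^4/f_2^{10})\varphi(q^2)^2 = f_4^{14}/(f_2^{14} f_8^4)$, the first summand of \eqref{eq:f1-4}.

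I expect the only genuinely substantive step to be justifying the auxiliary identity $\varphi(q^2)^2 = \varphi(q^4)^2 + 4q^2\psi(q^8)^2$ needed for the even part of the second formula; everything else is the routine bookkeeping of converting theta functions into the $f_i$ notation and collecting parity classes. If one prefers to bypass this identity altogether, one can simply cite the two-dissections of $1/f_1^2$ and $1/f_1^4$ directly from Hirschhorn's monograph \cite{Hir}, as most of these results are recorded there; the derivation sketched above is offered mainly to keep the argument self-contained.
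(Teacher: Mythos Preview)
Your derivation is correct. The paper's own proof of this lemma is simply a direct citation to Hirschhorn's monograph (specifically, eqs.~(1.9.4) and (1.10.1) there) with no further argument, so in a sense you have gone well beyond what the paper does: you supply a self-contained proof rather than an appeal to the literature. You yourself anticipate this in your final paragraph, and that anticipation is exactly right.

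For the record, the argument you give---expressing $1/f_1^2$ and $1/f_1^4$ as even-indexed prefactors times $\varphi(q)$ and $\varphi(q)^2$, applying the parity dissection $\varphi(q)=\varphi(q^4)+2q\,\psi(q^8)$, and collapsing the even part of the square via $\varphi(q^2)^2=\varphi(q^4)^2+4q^2\psi(q^8)^2$---is precisely the route by which these identities are established in Hirschhorn's book, so your proof and the cited reference are in fact the same underlying argument. The only step you flag as ``genuinely substantive,'' the auxiliary identity for $\varphi(q^2)^2$, is indeed the classical $\varphi(q)^2=\varphi(q^2)^2+4q\,\psi(q^4)^2$ (Hirschhorn's eq.~(1.10.1) itself, in theta-function form) with $q\mapsto q^2$; your parity-of-$m+n$ justification via the change of variables $(u,v)=\bigl(\tfrac{m+n}{2},\tfrac{m-n}{2}\bigr)$ is correct and standard.
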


\begin{proof}
	For \eqref{eq:f1-2}, see \cite[eq.~(1.9.4)]{Hir}. For \eqref{eq:f1-4}, see \cite[eq.~(1.10.1)]{Hir}.
\end{proof}

\begin{lemma}
	\begin{align}
		\frac{f_3}{f_1^3}&=\frac{f_4^6f_6^3}{f_2^9f_{12}^2}+3q\frac{f_4^2f_6f_{12}^2}{f_2^7},\label{eq:f1f3}\\
		\frac{1}{f_1f_3}&= \frac{f_8^2f_{12}^5}{f_2^2f_4f_6^4f_{24}^2}+q\frac{f_4^5f_{24}^2}{f_2^4f_6^2f_8^2f_{12}}.\label{eq:f1f3inv}
	\end{align}
\end{lemma}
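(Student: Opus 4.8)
The plan is to prove each of \eqref{eq:f1f3} and \eqref{eq:f1f3inv} as a genuine $2$-dissection, exploiting the fact that every eta-quotient appearing on the right-hand sides involves only \emph{even} subscripts. Concretely, the first summand in each identity is a power series in $q^2$, while the second summand is $q$ times a power series in $q^2$; hence the two summands are automatically supported on the even and odd exponents of $q$, respectively, and proving the identity amounts to proving an equality of two explicit $q$-series. My strategy is to rewrite the left-hand side as a product of classical theta functions evaluated at $q$ and $q^3$, times an eta-quotient that is \emph{already} a series in $q^2$, and then to insert known $2$-dissections of those theta products. Throughout I write $\varphi(q)=f_2^5/(f_1^2f_4^2)$ and $\psi(q)=f_2^2/f_1$ for the two standard theta functions.

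For \eqref{eq:f1f3inv} this reduction is especially clean. Since $\psi(q)\psi(q^3)=f_2^2f_6^2/(f_1f_3)$, we have
\begin{align*}
    \frac{1}{f_1f_3}=\frac{\psi(q)\psi(q^3)}{f_2^2f_6^2},
\end{align*}
and the denominator $f_2^2f_6^2$ is already a series in $q^2$. Thus the $2$-dissection of $1/(f_1f_3)$ is forced by that of the product $\psi(q)\psi(q^3)$: I would insert the known splitting of $\psi(q)\psi(q^3)$ into its even and odd parts (obtainable from the parity pattern of the triangular numbers generating $\psi$, and recorded in Hirschhorn's monograph), divide each part by $f_2^2f_6^2$, and simplify the resulting eta-quotients. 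The even part should collapse to $f_8^2f_{12}^5/(f_2^2f_4f_6^4f_{24}^2)$ and the odd part to $q\,f_4^5f_{24}^2/(f_2^4f_6^2f_8^2f_{12})$, matching the claim.

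For \eqref{eq:f1f3} the left-hand side $f_3/f_1^3=\eta(3\tau)/\eta(\tau)^3$ has level $12$, so I would keep to level-$12$ building blocks rather than the level-$16$ identities \eqref{eq:f1-2} and \eqref{eq:f1-4}. Here I would start from a level-$12$ $2$-dissection (for instance that of $\varphi(q)\varphi(q^3)$, again available in Hirschhorn), re-express $f_3/f_1^3$ in terms of it, split into even and odd parts, and reduce each part to an eta-quotient, checking that it matches $f_4^6f_6^3/(f_2^9f_{12}^2)$ and $3q\,f_4^2f_6f_{12}^2/f_2^7$. As a completely rigorous fallback that does not depend on locating the right intermediate dissection, I note that each of the three eta-quotients in \eqref{eq:f1f3} is a meromorphic modular form of weight $-1$ on $\Gamma_0(12)$ (and similarly every term in \eqref{eq:f1f3inv} lives on $\Gamma_0(24)$). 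After multiplying through by a suitable power of $\eta$ to clear denominators and obtain a holomorphic modular form, the identity reduces, via Ligozat's criterion together with Sturm's bound, to verifying that finitely many initial $q$-coefficients of the difference vanish, which is a finite computation.

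The conceptual content of both identities is small; the real obstacle is twofold. First, one must locate the intermediate theta-product dissection in exactly the right normalization: these differ between sources by substitutions $q\mapsto\pm q$ or by trivial eta factors, and a wrong normalization silently produces the wrong level. Second, matching the collected even and odd parts to the precise exponents on the right-hand sides is a delicate piece of eta-quotient bookkeeping, where mistracking even a single power of $f_2$ against $f_4$ derails the identification. The modular-forms route avoids this algebra entirely, at the cost of verifying holomorphy and the orders at the cusps and then carrying out the finite coefficient check dictated by the Sturm bound.
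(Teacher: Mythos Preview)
Your strategy is sound, but it is considerably more work than what the paper actually does. The paper's proof is a bare citation: \eqref{eq:f1f3} is exactly \cite[eq.~(22.7.3)]{Hir} after the substitution $q\mapsto -q$, and \eqref{eq:f1f3inv} is \cite[eq.~(30.12.3)]{Hir} verbatim. No derivation is given or needed, since both identities are recorded in Hirschhorn's monograph as stated.

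Your plan, by contrast, proposes to \emph{rederive} these dissections from other ingredients in the same monograph (the $2$-dissections of $\psi(q)\psi(q^3)$ and $\varphi(q)\varphi(q^3)$), followed by eta-quotient bookkeeping, with a Sturm-bound verification as a fallback. That route is correct and would succeed---your reduction $1/(f_1f_3)=\psi(q)\psi(q^3)/(f_2^2f_6^2)$ is valid, and the level observations for the modular-forms argument are accurate---but it is a proof of results that are already tabulated. If you are citing Hirschhorn for the intermediate theta dissections anyway, you may as well cite the final identities directly. The one genuine advantage of your approach is that it explains \emph{why} the identities hold rather than merely locating them; the Sturm-bound argument in particular gives a self-contained certificate independent of any reference.
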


\begin{proof}
	For \eqref{eq:f1f3}, see \cite[eq.~(22.7.3) with $q \mapsto -q$]{Hir}. For \eqref{eq:f1f3inv}, see \cite[eq.~(30.12.3)]{Hir}.
\end{proof}

Let $\UU_m$ be the \emph{unitizing operator of degree $m$} defined by
\begin{align*}
	\UU_m\left(\sum_{n\ge 0} a_n q^n\right) := \sum_{n\ge 0} a_{mn} q^n.
\end{align*}

\begin{lemma}\label{le:H-f1f15}
	\begin{align}\label{eq:H-f1f15}
		\UU_2\left(\frac{1}{f_1 f_{15}}\right) = \frac{f_{6}^2f_{10}^2}{f_1^2f_3f_{5}f_{15}^2}.
	\end{align}
\end{lemma}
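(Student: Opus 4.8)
The plan is to turn the extraction of the even part into a single closed-form $q$-series identity via the elementary relation
\begin{align*}
	\HH_2(A)(q^2) = \tfrac12\bigl(A(q)+A(-q)\bigr),
\end{align*}
applied to $A(q)=1/(f_1 f_{15})$. The only inputs required are the behaviours of the relevant eta-products under $q\mapsto -q$. Using the standard evaluation $(-q;-q)_\infty = f_2^3/(f_1 f_4)$ together with its image under $q\mapsto q^{15}$, I would record
\begin{align*}
	f_1(-q)=\frac{f_2^3}{f_1 f_4},\qquad f_{15}(-q)=\frac{f_{30}^3}{f_{15} f_{60}},
\end{align*}
so that $1/(f_1(-q)f_{15}(-q)) = f_1 f_4 f_{15} f_{60}/(f_2^3 f_{30}^3)$. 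Since the right-hand side of \eqref{eq:H-f1f15}, evaluated at $q^2$ (i.e.\ each $f_i\mapsto f_{2i}$), equals $f_{12}^2 f_{20}^2/(f_2^2 f_6 f_{10} f_{30}^2)$, the lemma is equivalent to the eta-quotient identity
\begin{align*}
	\frac{1}{f_1 f_{15}} + \frac{f_1 f_4 f_{15} f_{60}}{f_2^3 f_{30}^3} = \frac{2 f_{12}^2 f_{20}^2}{f_2^2 f_6 f_{10} f_{30}^2}.
\end{align*}

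Next I would clear the common factor $f_2^2 f_{30}^2$ and rewrite every surviving quotient as a Ramanujan theta function. Using $\psi(q)=f_2^2/f_1$, $\psi(-q)=f_1 f_4/f_2$, and their dilations $q\mapsto q^k$, one finds $f_2^2 f_{30}^2/(f_1 f_{15})=\psi(q)\psi(q^{15})$, the middle term becomes $f_1 f_4 f_{15} f_{60}/(f_2 f_{30})=\psi(-q)\psi(-q^{15})$, and the right-hand side becomes $2\,(f_{12}^2/f_6)(f_{20}^2/f_{10})=2\,\psi(q^6)\psi(q^{10})$. Hence the whole claim collapses to
\begin{align*}
	\psi(q)\psi(q^{15}) + \psi(-q)\psi(-q^{15}) = 2\,\psi(q^6)\,\psi(q^{10}).
\end{align*}
This is exactly the statement that the even part of the theta product $\psi(q)\psi(q^{15})$ equals $\psi(q^6)\psi(q^{10})$, which is precisely the type of $2$-dissection the lemma asks for, now in its most transparent form.

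The final theta identity is where the real work sits, and it is the step I expect to be the main obstacle. Since $\psi(q)=\sum_{n\ge 0} q^{n(n+1)/2}$, the identity is a statement comparing the binary quadratic representations coming from $\tfrac12 m(m+1)+\tfrac{15}{2} n(n+1)$ (restricted to the even exponents, i.e.\ $m\equiv n \bmod 2$ in the relevant sense) with those coming from $3\,a(a+1)+5\,b(b+1)$. The cleanest rigorous route is to invoke a Schröter-type product formula, which gives the full $2$-dissection of $\psi(q)\psi(q^{15})$ directly; one then reads off the even component and checks it matches $\psi(q^6)\psi(q^{10})$. Such $\psi(q)\psi(q^{k})$ dissections are catalogued in Hirschhorn's monograph, so in the write-up I would cite the appropriate Schröter identity and specialize, verifying the numerology of the two quadratic forms rather than re-deriving the theta machinery.

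As an alternative that stays entirely within the dissection toolkit of this subsection, one can instead factor $1/(f_1 f_{15}) = (f_5/f_1)\cdot 1/(f_5 f_{15})$, apply \eqref{eq:f1f3inv} with $q\mapsto q^5$ to $2$-dissect $1/(f_5 f_{15})$, and combine with a $2$-dissection of $f_5/f_1$. However, the latter ultimately rests on the $2$-dissection of $(q;q)_\infty$ and leads to a heavier eta-quotient simplification, so the $q\mapsto -q$ reduction above is the route I would actually carry out.
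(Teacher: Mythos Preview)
Your approach is essentially the same as the paper's: both reduce the claim to the single theta identity
\[
\psi(q)\psi(q^{15}) + \psi(-q)\psi(-q^{15}) = 2\,\psi(q^6)\psi(q^{10}) = 2\,\frac{f_{12}^2 f_{20}^2}{f_6 f_{10}},
\]
arrived at via $\HH_2(A)(q^2)=\tfrac12(A(q)+A(-q))$ and the same $\psi$-factorisations. The only cosmetic difference is that the paper first multiplies $1/(f_1 f_{15})$ by $f_2^2 f_{30}^2$ before huffing (so the theta product appears immediately) and then cites the above identity directly as a fifteenth-degree modular equation from Berndt's \emph{Ramanujan's Notebooks, Part III} (p.~377, Entry~9(iv)), rather than extracting it from a Schr\"oter formula in Hirschhorn.
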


\begin{proof}
	We require the following fifteenth-degree modular equation \cite[p.~377, Entry~9(iv)]{Ber91}:
	\begin{align*}
		\psi(q)\psi(q^{15}) + \psi(-q)\psi(-q^{15}) = 2\frac{f_{12}^2f_{20}^2}{f_6f_{10}},
	\end{align*}
	where $\psi(q):= f_2^2/f_1$ is one of \emph{Ramanujan's classical theta functions} \cite[p.~36, Entry~22(ii)]{Ber91}. Note that
	\begin{align*}
		\UU_2\big(\psi(q)\psi(q^{15})\big) = \frac{1}{2}\big[\psi(q)\psi(q^{15}) + \psi(-q)\psi(-q^{15})\big]_{q^2\mapsto q} = \frac{f_{6}^2f_{10}^2}{f_3f_{5}}.
	\end{align*}
	Now we only need the fact
	\begin{align*}
		\UU_2\left(\frac{1}{f_1 f_{15}}\right) = \frac{\UU_2\big(\psi(q)\psi(q^{15})\big)}{f_1^2 f_{15}^2}
	\end{align*}
	to conclude the claimed result.
\end{proof}

\subsection{Analytic proofs of 2-convolutivity}

Now we are in a position to offer analytic proofs of the $2$-convolutivity for the OEIS sequences listed at the beginning of this section.

\subsubsection{Sequence A102186, the $\PDO$ function}

Our prototypical example of a $2$-convolutive sequence is  $\PDO(n)$.  A generating function proof of this fact can be found in \cite{ALL2002}, and this is also highlighted in \cite{Sel2024_INTEGERS}.  Even so, we share such a proof here for the sake of completeness.

\begin{theorem}\label{th:A102186}
	The numbers $P_{1^{-1}3^{-1}4^{1}6^{2}12^{-1}}(n)$ form a $2$-convolutive sequence.
\end{theorem}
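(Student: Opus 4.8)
The plan is to verify the $2$-convolutive identity
$$
\HH_2\!\left(\frac{f_4 f_6^2}{f_1 f_3 f_{12}}\right) = \left(\frac{f_4 f_6^2}{f_1 f_3 f_{12}}\right)^{\!2}
$$
by a direct generating-function computation, extracting the even-indexed part of the left-hand side via the $2$-dissection formulas collected in the preceding lemmas. First I would write the generating function as $\frac{1}{f_1 f_3}\cdot \frac{f_4 f_6^2}{f_{12}}$, isolating the ``difficult'' factor $\frac{1}{f_1 f_3}$, which carries all the odd powers of $q$ through $f_1$ and $f_3$. The remaining factor $\frac{f_4 f_6^2}{f_{12}}$ is already a function of $q^2$ (all subscripts are even), so it passes through the Huffing operator $\HH_2$ with each argument simply halved, contributing $\frac{f_2 f_3^2}{f_6}$ after the substitution $q^2 \mapsto q$.

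Next I would apply the $2$-dissection \eqref{eq:f1f3inv} for $\frac{1}{f_1 f_3}$, which splits it into an even part plus $q$ times an odd part in the variable $q^2$. Since $\HH_2$ kills the odd-power term and collapses the even-power term under $q^2 \mapsto q$, applying $\HH_2$ to $\frac{1}{f_1 f_3}$ yields the even component $\frac{f_8^2 f_{12}^5}{f_2^2 f_4 f_6^4 f_{24}^2}$ with all subscripts halved, namely $\frac{f_4^2 f_6^5}{f_1^2 f_2 f_3^4 f_{12}^2}$. Multiplying this by the halved contribution $\frac{f_2 f_3^2}{f_6}$ from the even factor should give, after cancellation, exactly $\frac{f_4^2 f_6^4}{f_1^2 f_3^2 f_{12}^2}$, which is precisely the square of the original eta-product. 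I would present this as a short chain of equalities, consolidating all the $f_i$ exponents at the end.

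The main obstacle I anticipate is purely bookkeeping: after substituting \eqref{eq:f1f3inv} and halving indices, one must carefully track the exponent of each $f_i$ across the product of several eta-quotients and confirm that everything collapses to the clean square. A subtle point to check is that $\HH_2$ distributes correctly here—it is \emph{not} multiplicative in general, but because the factor $\frac{f_4 f_6^2}{f_{12}}$ is genuinely a series in $q^2$, the identity $\HH_2(A(q^2)\,B(q)) = A(q)\,\HH_2(B(q))$ applies and lets me pull the even factor outside the operator before dissecting only $\frac{1}{f_1 f_3}$. Verifying this justification, and ensuring no stray sign or index shift enters when rewriting the dissection in the halved variable, is where I would concentrate the care; the algebra itself is routine once the dissection is in hand.
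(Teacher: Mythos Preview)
Your proposal is correct and follows essentially the same route as the paper's proof: factor out the $q^2$-series $\dfrac{f_4 f_6^2}{f_{12}}$, apply the $2$-dissection \eqref{eq:f1f3inv} to $\dfrac{1}{f_1 f_3}$, and then halve indices under $\HH_2$ to recover the square of the original eta-quotient. The paper carries out exactly this computation in two displayed lines, and your discussion of why $\HH_2$ interacts correctly with the even factor makes the argument, if anything, slightly more explicit than the paper's version.
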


\begin{proof}
	Recall that
	\begin{align*}
		\sum_{n\ge 0} P_{1^{-1}3^{-1}4^{1}6^{2}12^{-1}}(n) q^n = \frac{f_4 f_6^2}{f_1 f_3 f_{12}}.
	\end{align*}
	We have
	\begin{align*}
		\frac{f_4 f_6^2}{f_1 f_3 f_{12}} = \frac{f_4 f_6^2}{f_{12}}\cdot \frac{1}{f_1 f_3} \overset{\eqref{eq:f1f3inv}}{=} \frac{f_4 f_6^2}{f_{12}}\cdot \left(\frac{f_8^2f_{12}^5}{f_2^2f_4f_6^4f_{24}^2}+q\frac{f_4^5f_{24}^2}{f_2^4f_6^2f_8^2f_{12}}\right).
	\end{align*}
	Therefore,
	\begin{align*}
		\UU_2\left(\frac{f_4 f_6^2}{f_1 f_3 f_{12}}\right) = \frac{f_2 f_3^2}{f_{6}}\cdot \frac{f_4^2f_{6}^5}{f_1^2f_2f_3^4f_{12}^2} = \left(\frac{f_4 f_6^2}{f_1 f_3 f_{12}}\right)^2,
	\end{align*}
	as required.
\end{proof}

\subsubsection{Sequence A094023}

\begin{theorem}\label{th:A094023}
	The numbers $P_{1^{-1} 6^1 10^{1} 15^{-1}}(n)$ form a $2$-convolutive sequence.
\end{theorem}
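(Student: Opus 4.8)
The plan is to reduce the claim to the $2$-dissection already established in Lemma~\ref{le:H-f1f15}. By Definition~\ref{def:mconv}, the asserted $2$-convolutivity of $P_{1^{-1} 6^1 10^{1} 15^{-1}}(n)$ is equivalent to
\begin{align*}
	\HH_2\left(\frac{f_6 f_{10}}{f_1 f_{15}}\right) = \left(\frac{f_6 f_{10}}{f_1 f_{15}}\right)^2,
\end{align*}
so the whole task is to compute the left-hand side.

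First I would isolate the factor $f_6 f_{10}$ and observe that, because $6$ and $10$ are both even, each of $f_6 = (q^6;q^6)_\infty$ and $f_{10} = (q^{10};q^{10})_\infty$ is a power series in $q^2$; hence so is their product. This is the decisive structural feature, since the Huffing operator $\HH_2$ enjoys a \emph{pull-out} property: for any power series $A(q) = a(q^2)$ in $q^2$ and any $B(q)$, one has $\HH_2\big(A(q)B(q)\big) = a(q)\,\HH_2\big(B(q)\big)$. Applying this with $A = f_6 f_{10}$, whose image under $q^2 \mapsto q$ is $f_3 f_5$ (as $f_6 \mapsto f_3$ and $f_{10} \mapsto f_5$), and with $B = 1/(f_1 f_{15})$, yields
\begin{align*}
	\HH_2\left(\frac{f_6 f_{10}}{f_1 f_{15}}\right) = f_3 f_5 \cdot \HH_2\left(\frac{1}{f_1 f_{15}}\right).
\end{align*}

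Next I would invoke Lemma~\ref{le:H-f1f15}, namely $\HH_2(1/(f_1 f_{15})) = f_6^2 f_{10}^2 / (f_1^2 f_3 f_5 f_{15}^2)$. Substituting and cancelling $f_3 f_5$ gives
\begin{align*}
	\HH_2\left(\frac{f_6 f_{10}}{f_1 f_{15}}\right) = f_3 f_5 \cdot \frac{f_6^2 f_{10}^2}{f_1^2 f_3 f_5 f_{15}^2} = \frac{f_6^2 f_{10}^2}{f_1^2 f_{15}^2} = \left(\frac{f_6 f_{10}}{f_1 f_{15}}\right)^2,
\end{align*}
which is exactly the desired identity.

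The only genuinely substantial ingredient is Lemma~\ref{le:H-f1f15}, whose proof rests on Ramanujan's fifteenth-degree modular equation; since that lemma is already in hand, the remaining argument is essentially bookkeeping. The one point deserving care — and the step I would verify most carefully — is the pull-out property itself, which follows by comparing coefficients: if $a(q) = \sum_k b_k q^k$ so that $A(q) = \sum_k b_k q^{2k}$, then the coefficient of $q^{2n}$ in $A(q)B(q)$ equals $\sum_k b_k [q^{2(n-k)}]B(q)$, which is precisely the coefficient of $q^n$ in $a(q)\,\HH_2(B(q))$.
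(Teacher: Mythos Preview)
Your proof is correct and follows essentially the same route as the paper's own argument: pull the even-power factor $f_6 f_{10}$ through $\HH_2$ (becoming $f_3 f_5$), then invoke Lemma~\ref{le:H-f1f15} and cancel. The only difference is that you spell out and justify the pull-out property for $\HH_2$, which the paper uses without comment.
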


\begin{proof}
	Recall that
	\begin{align*}
		\sum_{n\ge 0} P_{1^{-1} 6^1 10^{1} 15^{-1}}(n) q^n = \frac{f_6 f_{10}}{f_1 f_{15}}.
	\end{align*}
	Therefore,
	\begin{align*}
		\UU_2\left(\frac{f_6 f_{10}}{f_1 f_{15}}\right) = f_3 f_5\cdot \UU_2\left(\frac{1}{f_1 f_{15}}\right) \overset{\eqref{eq:H-f1f15}}{=} f_3 f_5\cdot \frac{f_{6}^2f_{10}^2}{f_1^2f_3f_{5}f_{15}^2} = \left(\frac{f_6 f_{10}}{f_1 f_{15}}\right)^2,
	\end{align*}
	as required.
\end{proof}

\subsubsection{Sequence A128128}

\begin{theorem}\label{th:A128128}
	The numbers $P_{1^{-3} 2^3 3^{1} 6^{-1}}(n)$ form a $2$-convolutive sequence.
\end{theorem}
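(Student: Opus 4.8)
The plan is to mimic the analytic arguments already used for the preceding sequences: apply $\HH_2$ directly to the generating function $\frac{f_2^3 f_3}{f_1^3 f_6}$ and verify that the outcome equals its square,
\begin{align*}
	\left(\frac{f_2^3 f_3}{f_1^3 f_6}\right)^2 = \frac{f_2^6 f_3^2}{f_1^6 f_6^2}.
\end{align*}
By Definition~\ref{def:mconv}, establishing this single identity is exactly what is required.

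First I would peel off the part of the series that is already a power series in $q^2$. Writing
\begin{align*}
	\frac{f_2^3 f_3}{f_1^3 f_6} = \frac{f_2^3}{f_6}\cdot \frac{f_3}{f_1^3},
\end{align*}
the factor $\frac{f_2^3}{f_6}$ involves only $q^2$ and $q^6$, hence is a function of $q^2$ alone. Since $\HH_2\big(G(q^2)\,h(q)\big) = G(q)\,\HH_2\big(h(q)\big)$ for any such $G$, this factor can be pulled outside the Huffing operator at the cost of the rescaling $q^2\mapsto q$; tracking $f_2\mapsto f_1$ and $f_6\mapsto f_3$ shows that $\frac{f_2^3}{f_6}$ becomes $\frac{f_1^3}{f_3}$. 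The problem thus reduces to evaluating $\HH_2\!\left(\frac{f_3}{f_1^3}\right)$.

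For the latter I would invoke the $2$-dissection formula \eqref{eq:f1f3},
\begin{align*}
	\frac{f_3}{f_1^3}=\frac{f_4^6f_6^3}{f_2^9f_{12}^2}+3q\frac{f_4^2f_6f_{12}^2}{f_2^7}.
\end{align*}
The second summand carries an explicit odd power of $q$, its remaining factors being functions of $q^2$, so it is annihilated by $\HH_2$; only the first summand, a series in $q^2$, survives. Applying $q^2\mapsto q$ to it (so that $f_4\mapsto f_2$, $f_6\mapsto f_3$, $f_2\mapsto f_1$, $f_{12}\mapsto f_6$) gives $\HH_2\!\left(\frac{f_3}{f_1^3}\right)=\frac{f_2^6 f_3^3}{f_1^9 f_6^2}$. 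Combining the two pieces yields
\begin{align*}
	\HH_2\!\left(\frac{f_2^3 f_3}{f_1^3 f_6}\right) = \frac{f_1^3}{f_3}\cdot\frac{f_2^6 f_3^3}{f_1^9 f_6^2} = \frac{f_2^6 f_3^2}{f_1^6 f_6^2} = \left(\frac{f_2^3 f_3}{f_1^3 f_6}\right)^2,
\end{align*}
as desired.

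Since every identity I rely on is already available in the excerpt, I do not anticipate a serious obstacle. The one point demanding care is the consistent bookkeeping of the even-coefficient extraction together with the rescaling $q^2\mapsto q$: correctly re-indexing each $f_i$ under this substitution and confirming that the ``odd'' summand of \eqref{eq:f1f3} genuinely drops out. Once that bookkeeping is in place, the verification is a one-line cancellation.
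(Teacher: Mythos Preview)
Your proof is correct and follows exactly the same approach as the paper: factor the generating function as $\frac{f_2^3}{f_6}\cdot\frac{f_3}{f_1^3}$, apply the $2$-dissection \eqref{eq:f1f3} to the second factor, extract the even part, and simplify to obtain the square. The only difference is that you spell out the bookkeeping for $\HH_2$ more explicitly than the paper does.
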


\begin{proof}
	Recall that
	\begin{align*}
		\sum_{n\ge 0} P_{1^{-3} 2^3 3^{1} 6^{-1}}(n) q^n = \frac{f_2^3 f_{3}}{f_1^3 f_{6}}.
	\end{align*}
	We have
	\begin{align*}
		\frac{f_2^3 f_{3}}{f_1^3 f_{6}} = \frac{f_2^3}{f_{6}}\cdot \frac{f_{3}}{f_1^3} \overset{\eqref{eq:f1f3}}{=} \frac{f_2^3}{f_{6}}\cdot \left(\frac{f_4^6f_6^3}{f_2^9f_{12}^2}+3q\frac{f_4^2f_6f_{12}^2}{f_2^7}\right).
	\end{align*}
	Therefore,
	\begin{align*}
		\UU_2\left(\frac{f_2^3 f_{3}}{f_1^3 f_{6}}\right) = \frac{f_1^3}{f_{3}}\cdot \frac{f_2^6f_3^3}{f_1^9f_{6}^2} = \left(\frac{f_2^3 f_{3}}{f_1^3 f_{6}}\right)^2,
	\end{align*}
	as required.
\end{proof}

\subsubsection{Sequence A007096}

\begin{theorem}\label{th:A007096}
	The numbers $P_{1^{-4} 2^6 4^{-2}}(n)$ form a $2$-convolutive sequence.
\end{theorem}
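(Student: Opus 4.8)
The plan is to follow the same analytic template used in Theorems~\ref{th:A102186}, \ref{th:A094023}, and \ref{th:A128128}: apply a $2$-dissection to isolate the singular factor, then let $\HH_2$ extract the even-indexed coefficients, and finally compare the result with the square of the original series. Here the singular factor is $1/f_1^4$, so the relevant tool is the $2$-dissection \eqref{eq:f1-4}.

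First I would write
\begin{align*}
	\frac{f_2^6}{f_1^4 f_4^2} = \frac{f_2^6}{f_4^2}\cdot\frac{1}{f_1^4}
\end{align*}
and substitute \eqref{eq:f1-4}. After cancelling exponents, this should collapse to two clean monomial terms, namely
\begin{align*}
	\frac{f_2^6}{f_1^4 f_4^2} = \frac{f_4^{12}}{f_2^8 f_8^4} + 4q\,\frac{f_8^4}{f_2^4}.
\end{align*}

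The crucial observation is the parity split: the first summand involves only $f_2$, $f_4$, $f_8$ and is therefore a power series in $q^2$, while the second carries an explicit factor of $q$ and is thus supported on odd powers of $q$. Consequently $\HH_2$ annihilates the second term and, under the substitution $q^2\mapsto q$, sends the first to
\begin{align*}
	\HH_2\left(\frac{f_2^6}{f_1^4 f_4^2}\right) = \frac{f_2^{12}}{f_1^8 f_4^4},
\end{align*}
which is exactly $\bigl(f_2^6/(f_1^4 f_4^2)\bigr)^2$, giving the claimed $2$-convolutivity.

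I do not anticipate a genuine obstacle. Everything reduces to bookkeeping of exponents after substituting \eqref{eq:f1-4}, and the only point requiring care is confirming that the two terms separate perfectly into an even part and an odd part, so that no even contribution hides inside the $q$-term. Since the $2$-dissection \eqref{eq:f1-4} is tailored precisely to $1/f_1^4$, this cancellation is essentially forced, and the resulting argument is markedly shorter than the bijective proof of Theorem~\ref{thm:A007096}; it thereby furnishes a second, purely analytic, confirmation of a result already established combinatorially.
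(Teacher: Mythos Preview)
Your proof is correct and follows essentially the same route as the paper: factor out $f_2^6/f_4^2$, apply the $2$-dissection \eqref{eq:f1-4} to $1/f_1^4$, observe that the two resulting terms separate cleanly into even and odd parts, and then apply $\HH_2$ to recover the square. The only cosmetic difference is that you simplify the two monomial terms before applying $\HH_2$, whereas the paper keeps the factored form $\tfrac{f_2^6}{f_4^2}\cdot(\cdots)$ and simplifies afterwards.
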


\begin{proof}
	Recall that
	\begin{align*}
		\sum_{n\ge 0} P_{1^{-4} 2^6 4^{-2}}(n) q^n = \frac{f_2^6}{f_1^4 f_4^2}.
	\end{align*}
	We have
	\begin{align*}
		\frac{f_2^6}{f_1^4 f_4^2} = \frac{f_2^6}{f_4^2}\cdot \frac{1}{f_1^4} \overset{\eqref{eq:f1-4}}{=} \frac{f_2^6}{f_4^2}\cdot \left(\frac{f_4^{14}}{f_2^{14}f_8^4}+4q\frac{f_4^2f_8^4}{f_2^{10}}\right).
	\end{align*}
	Therefore,
	\begin{align*}
		\UU_2\left(\frac{f_2^6}{f_1^4 f_4^2}\right) = \frac{f_1^6}{f_2^2}\cdot \frac{f_2^{14}}{f_1^{14}f_4^4} = \left(\frac{f_2^6}{f_1^4 f_4^2}\right)^2,
	\end{align*}
	as required.
\end{proof}

\subsubsection{Sequence A103258}

\begin{theorem}\label{th:A103258}
	The numbers $P_{1^{-2} 2^1 4^{2} 8^{-1}}(n)$ form a $2$-convolutive sequence.
\end{theorem}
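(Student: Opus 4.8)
The plan is to mimic the analytic proofs of the preceding theorems (Theorems~\ref{th:A102186}, \ref{th:A094023}, \ref{th:A128128}, \ref{th:A007096}), all of which follow the same template: isolate the factor involving $f_1$, apply a known $2$-dissection identity to split it into even- and odd-indexed pieces, apply the Huffing operator $\HH_2$ (which kills the odd piece and halves the indices of the even piece), and verify that the result equals the square of the original generating function. For the sequence A103258, whose generating function is $f_2 f_4^2/(f_1^2 f_8)$, the only factor carrying odd indices is $1/f_1^2$, so I would use the $2$-dissection \eqref{eq:f1-2} for $1/f_1^2$.

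First I would write
\begin{align*}
	\frac{f_2 f_4^2}{f_1^2 f_8} = \frac{f_2 f_4^2}{f_8}\cdot \frac{1}{f_1^2} \overset{\eqref{eq:f1-2}}{=} \frac{f_2 f_4^2}{f_8}\cdot \left(\frac{f_8^5}{f_2^5f_{16}^2}+2q\frac{f_4^2f_{16}^2}{f_2^5f_8}\right).
\end{align*}
The prefactor $f_2 f_4^2/f_8$ involves only even indices, so under $\HH_2$ its indices simply halve. The first summand of the dissection also involves only even indices and contributes a term with no surviving power of $q$, whereas the second summand carries the factor $2q$ with an otherwise even-indexed coefficient, so $\HH_2$ annihilates it. Consequently I would obtain
\begin{align*}
	\HH_2\left(\frac{f_2 f_4^2}{f_1^2 f_8}\right) = \frac{f_1 f_2^2}{f_4}\cdot \frac{f_4^5}{f_1^5f_8^2} = \frac{f_2^2 f_4^4}{f_1^4 f_8^2} = \left(\frac{f_2 f_4^2}{f_1^2 f_8}\right)^2,
\end{align*}
which is exactly the required $2$-convolutive identity, and the result would follow by Definition~\ref{def:mconv}.

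I expect no genuine obstacle here, since the structure is identical to the four analytic proofs already carried out, and the single dissection formula \eqref{eq:f1-2} is precisely the tool needed. The main thing to watch carefully is the bookkeeping after applying $\HH_2$: the halving of indices must be applied consistently to every eta-factor ($f_{16}\mapsto f_8$, $f_8\mapsto f_4$, $f_4\mapsto f_2$, $f_2\mapsto f_1$), and one must confirm that the surviving term is the even-indexed one (the term \emph{without} the explicit $q$), so that no stray factor of $q$ or sign error creeps in. A final algebraic simplification then checks that the exponents match those of the squared product; collecting powers of $f_1,f_2,f_4,f_8$ on both sides is the only computation, and it is routine.
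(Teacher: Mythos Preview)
Your proposal is correct and follows essentially the same approach as the paper's own proof: factor out $f_2 f_4^2/f_8$, apply the $2$-dissection \eqref{eq:f1-2} to $1/f_1^2$, and observe that $\HH_2$ kills the odd piece and halves indices in the even piece, yielding the square of the original eta-product. Your write-up is in fact slightly more explicit than the paper's in justifying why the $2q$-term disappears and in displaying the intermediate simplification $\frac{f_2^2 f_4^4}{f_1^4 f_8^2}$.
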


\begin{proof}
	Recall that
	\begin{align*}
		\sum_{n\ge 0} P_{1^{-2} 2^1 4^{2} 8^{-1}}(n) q^n = \frac{f_2 f_4^2}{f_1^2 f_8}.
	\end{align*}
	We have
	\begin{align*}
		\frac{f_2 f_4^2}{f_1^2 f_8} = \frac{f_2 f_4^2}{f_8}\cdot \frac{1}{f_1^2} \overset{\eqref{eq:f1-2}}{=} \frac{f_2 f_4^2}{f_8}\cdot \left(\frac{f_8^5}{f_2^5f_{16}^2}+2q\frac{f_4^2f_{16}^2}{f_2^5f_8}\right).
	\end{align*}
	Therefore,
	\begin{align*}
		\UU_2\left(\frac{f_2 f_4^2}{f_1^2 f_8}\right) = \frac{f_1 f_2^2}{f_4}\cdot \frac{f_4^5}{f_1^5f_{8}^2} = \left(\frac{f_2 f_4^2}{f_1^2 f_8}\right)^2,
	\end{align*}
	as required.
\end{proof}

\section{3-Convolutive sequences}\label{sec:3cons}

In the previous two sections, our attention was restricted to $2$-convolutivity. Since we have introduced the more general $m$-convolutive sequences in Definition~\ref{def:mconv}, a natural question to ask is  \textit{whether there exist any instances of higher convolutive sequences?} Fortunately, as part of our exhaustive search of the OEIS~\cite{OEIS}, we identified two $3$-convolutive sequences:
\begin{description}[itemsep=2pt]
	\item[A098151] This sequence enumerates overpartitions with no parts (either overlined or not) divisible by $3$. Its generating function is
	\begin{align*}
		\sum_{n\ge 0} P_{1^{-2}2^{1}3^{2}6^{-1}}(n) q^n = \frac{f_2 f_3^2}{f_1^2 f_6}.
	\end{align*}
    We remark that two other combinatorial descriptions of this sequence are provided in the corresponding OEIS entry, with one of them featuring Schur overpartitions~\cite[pp.~216--217, Corollary~1.4]{Lov2005}. To see how our description here works, we simply rewrite the generating series as
	\begin{align*}
		\frac{f_2 f_3^2}{f_1^2 f_6} = \frac{f_2}{f_1^2}\cdot \frac{f_3^2}{f_6}.
	\end{align*}

	\item[A385520] This sequence enumerates partitions where each even part can appear at most twice, while each odd part can appear once, thrice, or four times. Its generating function is
	\begin{align*}
		\sum_{n\ge 0} P_{1^{-1} 2^1 3^{-1} 4^{-1} 6^3 12^{-1}}(n) q^n = \frac{f_2 f_6^3}{f_1 f_3 f_4 f_{12}}.
	\end{align*}
    To see the above partition-theoretic description of this eta-product, note that 
\begin{align*}
  \frac{f_2 f_6^3}{f_1 f_3 f_4 f_{12}} &= \frac{f_2^2}{f_1f_4}\cdot\frac{f_6^2}{f_3f_{12}}\cdot\frac{f_6}{f_2} = (-q;q^2)_{\infty}(-q^3;q^6)_{\infty}\cdot \frac{(q^6;q^6)_{\infty}}{(q^2;q^2)_{\infty}} \\
  &=\prod_{i\ge 1}(1+q^{2i-1}+q^{6i-3}+q^{8i-4})\prod_{j\ge 1}(1+q^{2j}+q^{4j}).
\end{align*}
\end{description}

\begin{remark}
    Our search of the OEIS actually revealed four $3$-convolutive sequences, but they form two pairs of dual sequences in the following sense. For a given sequence $(a_n)_{n\ge 0}$, we let $a'_n:=(-1)^na_n$ and call the sequence $(a'_n)_{n\ge 0}$ the \emph{dual} sequence of $(a_n)_{n\ge 0}$. Noticing that $(-1)^{3n}=(-1)^n$ for any integer $n$, it becomes evident that
    $$a_{3n}=\sum_{0\le r,s\le n}a_ra_sa_{n-r-s}$$
    holds if and only if
    $$a'_{3n}=\sum_{0\le r,s\le n}a'_ra'_sa'_{n-r-s}$$
    holds. That is to say, a sequence is $3$-convolutive if and only if its dual sequence is $3$-convolutive. The sequences A132002 and A293306 from the OEIS are the dual sequences of A098151 and A385520, respectively, rendering both of them $3$-convolutive as well.
\end{remark}

\subsection{3-Dissection identities}

Recall that we have introduced one of Ramanujan's theta functions $\psi(q)$ for the proof of Lemma~\ref{le:H-f1f15}. Now we need another theta function $\varphi(q)$ and we note from \cite[eqs.~(1.5.8) and (1.5.9)]{Hir} that
\begin{align*}
	\varphi(-q) = \frac{f_1^2}{f_2},\qquad\qquad \psi(-q) = \frac{f_1 f_4}{f_2}.
\end{align*}
The following \emph{$3$-dissection} identities are useful in our analysis.

\begin{lemma}
	\begin{align}
		\frac{1}{\varphi(-q)}&=\frac{\varphi(-q^9)^3}{\varphi(-q^3)^4}\left(1+2qw(q^3)+4q^2 w(q^3)^2\right),\label{eq:1/-phi--3dis}\\
		\frac{1}{\psi(-q)}&=\frac{\psi(-q^9)^3}{\psi(-q^3)^4}\left(\frac{1}{w(-q^3)^2}+\frac{q}{w(-q^3)}+q^2\right)\label{eq:1/psi--3dis},
	\end{align}
	where
	\begin{equation}
		w(q)=\frac{f_1f_6^3}{f_2f_3^3}.\label{def:w(q)}
	\end{equation}
\end{lemma}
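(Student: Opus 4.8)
The plan is to establish the two $3$-dissection identities \eqref{eq:1/-phi--3dis} and \eqref{eq:1/psi--3dis} by reducing them to standard $3$-dissections of the generating functions for $\varphi$ and $\psi$ that are already available in Hirschhorn's monograph. The key observation is that $w(q)=f_1f_6^3/(f_2f_3^3)$ is precisely the auxiliary quotient that governs the $3$-dissection of these theta functions, so the right-hand sides are really just repackagings of known results.

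For \eqref{eq:1/-phi--3dis}, I would first recall Hirschhorn's $3$-dissection of $\varphi(-q)$ itself (an identity of the shape $\varphi(-q)=\varphi(-q^9)-2q f_? \cdots$, expressing $\varphi(-q)$ in terms of $q^3$-series and a single ``middle'' term). Rather than dissecting $\varphi(-q)$ directly, the cleaner route for the \emph{reciprocal} is to exploit the factorization $\varphi(-q)\varphi(q)=\varphi(-q^2)^2$ together with the product formula $\varphi(-q)=f_1^2/f_2$, so that $1/\varphi(-q)$ can be written as a ratio whose $3$-dissection follows from multiplying the known $3$-dissection of $\varphi(-q)$ by a compensating factor built from $q^9$-series. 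Concretely, I would write
\begin{align*}
	\frac{1}{\varphi(-q)}=\frac{\varphi(-q^9)^3}{\varphi(-q^3)^4}\cdot\frac{\varphi(-q^3)^4}{\varphi(-q^9)^3\,\varphi(-q)},
\end{align*}
and then show that the second factor equals $1+2qw(q^3)+4q^2w(q^3)^2$ by expanding $\varphi(-q)$ via its $3$-dissection and simplifying with the definition \eqref{def:w(q)}. The appearance of the geometric-like pattern $1,\,2w,\,4w^2$ strongly suggests that the middle component of the $\varphi$ dissection is itself proportional to $w(q^3)\,\varphi(-q^9)^3/\varphi(-q^3)^3$, so the three residue classes modulo $3$ assemble into exactly this closed form.

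For \eqref{eq:1/psi--3dis}, the strategy is parallel but uses $\psi(-q)=f_1f_4/f_2$ and the corresponding $3$-dissection of $\psi(-q)$ from Hirschhorn. Here the bracketed factor $w(-q^3)^{-2}+q\,w(-q^3)^{-1}+q^2$ carries negative powers of $w$, reflecting that the reciprocal of the $\psi$ dissection inverts the role played by $w$ in the $\varphi$ case; I expect the three terms to correspond to the three residues modulo $3$ after clearing the leading factor $\psi(-q^9)^3/\psi(-q^3)^4$. In both cases the verification ultimately reduces to a finite identity among eta-quotients in the variable $q^3$, which can be checked either by a further round of product manipulation or, as a safeguard, by comparing enough power-series coefficients to pin down the rational function of $w$.

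The main obstacle I anticipate is \emph{locating and normalizing the base $3$-dissections} of $\varphi(-q)$ and $\psi(-q)$ in a form whose ``middle'' term is exactly $w(q^3)$ (resp.\ $w(-q^3)$) times the natural $q^9$-quotient; Hirschhorn records these dissections, but matching his normalization to the quotient \eqref{def:w(q)} and then algebraically inverting the resulting three-term expression into the compact closed forms stated here will require careful bookkeeping of the eta-quotient exponents. Once the middle term is correctly identified as a multiple of $w$, the collapse of the reciprocal into a quadratic (resp.\ a three-term Laurent expression) in $w$ is essentially forced, so this identification is the crux of the proof.
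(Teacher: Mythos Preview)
Your proposal identifies the right raw material---Hirschhorn's $3$-dissections of $\varphi(-q)$ and $\psi(-q)$---but it is vague precisely at the one step that matters: how to pass from the dissection of $\varphi(-q)$ to a \emph{closed three-term} expression for its reciprocal. Writing $1/\varphi(-q)$ as a product and then ``expanding $\varphi(-q)$ via its $3$-dissection and simplifying'' does not by itself produce a polynomial in $q$ of degree~$2$; inverting a generic three-term $q$-expansion yields an infinite series. You also speak of a ``three-term'' dissection of $\varphi(-q)$, whereas the relevant Hirschhorn identity (his eq.~(14.3.4)) is a \emph{two-term} dissection, and that structural fact is what makes the inversion collapse.

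The paper's proof supplies exactly the missing mechanism: substitute $q\mapsto\omega q$ and $q\mapsto\omega^2 q$ (with $\omega$ a primitive cube root of unity) into Hirschhorn's dissection and multiply the two results. If $\varphi(-q)=A(q^3)-2qB(q^3)$, then
\[
\varphi(-\omega q)\,\varphi(-\omega^2 q)=A^2+2qAB+4q^2B^2,
\]
which already exhibits the $1,\,2w,\,4w^2$ pattern you noticed, while the full product $\varphi(-q)\varphi(-\omega q)\varphi(-\omega^2 q)=A^3-8q^3B^3$ is a function of $q^3$ alone and furnishes the prefactor $\varphi(-q^9)^3/\varphi(-q^3)^4$. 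The same trick with eq.~(14.3.5) handles $\psi(-q)$. So your instinct that ``the geometric-like pattern $1,2w,4w^2$'' signals something structural is correct, but the structure is the difference-of-cubes factorization coming from roots of unity, not a direct term-by-term match of residue classes. Incorporating that substitution would turn your sketch into a complete argument and remove the need for your coefficient-comparison fallback.
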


\begin{proof}
	We start with \cite[eq.~(14.3.4)]{Hir} and substitute $\omega q$ and $\omega^2 q$ for $q$ where $\omega$ is a primitive cubic root of unity. Multiplying the two results, we obtain \eqref{eq:1/-phi--3dis}. For \eqref{eq:1/psi--3dis}, we use \cite[eq.~(14.3.5)]{Hir} and conduct the same analysis.
\end{proof}

\subsection{Analytic proofs of 3-convolutivity}

Now we are ready to prove the $3$-convolutivity for the two sequences discussed in this section in view of their generating functions.

\subsubsection{Sequence A098151}

\begin{theorem}\label{th:A098151}
	The numbers $P_{1^{-2}2^{1}3^{2}6^{-1}}(n)$ form a $3$-convolutive sequence.
\end{theorem}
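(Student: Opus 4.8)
The plan is to mirror the analytic strategy used for the $2$-convolutive sequences in the previous section, now deploying the $3$-dissection identity \eqref{eq:1/-phi--3dis}. The first step is to recast the generating function in terms of Ramanujan's theta function $\varphi$. Since $\varphi(-q) = f_1^2/f_2$, we have $1/\varphi(-q) = f_2/f_1^2$ and $\varphi(-q^3) = f_3^2/f_6$, so that
\begin{align*}
	\frac{f_2 f_3^2}{f_1^2 f_6} = \frac{f_2}{f_1^2}\cdot\frac{f_3^2}{f_6} = \frac{\varphi(-q^3)}{\varphi(-q)}.
\end{align*}
This is precisely the shape to which \eqref{eq:1/-phi--3dis} applies, so the problem reduces to dissecting $1/\varphi(-q)$ and tracking residues modulo $3$.

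Next I would substitute \eqref{eq:1/-phi--3dis} for $1/\varphi(-q)$ and absorb the multiplier $\varphi(-q^3)$, giving
\begin{align*}
	\frac{\varphi(-q^3)}{\varphi(-q)} = \frac{\varphi(-q^9)^3}{\varphi(-q^3)^3}\Big(1 + 2q\,w(q^3) + 4q^2 w(q^3)^2\Big),
\end{align*}
where I have used the fortunate cancellation $\varphi(-q^3)\cdot\varphi(-q^3)^{-4} = \varphi(-q^3)^{-3}$. The crucial observation is that both the prefactor $\varphi(-q^9)^3/\varphi(-q^3)^3$ and the factor $w(q^3) = f_3 f_{18}^3/(f_6 f_9^3)$ are power series in $q^3$. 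Hence the three summands $1$, $2q\,w(q^3)$, and $4q^2 w(q^3)^2$ contribute monomials whose exponents lie in the residue classes $0$, $1$, and $2$ modulo $3$, respectively.

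Applying the Huffing operator $\HH_3$ therefore annihilates the last two terms, keeps only the first, and replaces $q^3\mapsto q$, yielding
\begin{align*}
	\HH_3\!\left(\frac{f_2 f_3^2}{f_1^2 f_6}\right) = \frac{\varphi(-q^3)^3}{\varphi(-q)^3} = \left(\frac{\varphi(-q^3)}{\varphi(-q)}\right)^3 = \left(\frac{f_2 f_3^2}{f_1^2 f_6}\right)^3,
\end{align*}
which is exactly the claimed $3$-convolutivity. The only point requiring care — and the one genuine obstacle — is the residue-class bookkeeping: one must confirm that $w(q^3)$ is genuinely a series in $q^3$ so that the three terms separate cleanly by residue, and that the $\varphi(-q^3)^{-3}$ arising from the cancellation makes the surviving term a perfect cube of the original series rather than merely some unrelated eta-quotient. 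Both facts drop out immediately from the definitions of $w$ and $\varphi$, so beyond recognizing the right theta-quotient form at the outset, no serious computation is involved.
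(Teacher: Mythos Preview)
Your proof is correct and follows essentially the same approach as the paper: both factor the generating function as $\varphi(-q^3)/\varphi(-q)$ (the paper writes this as $\tfrac{f_3^2}{f_6}\cdot\tfrac{1}{\varphi(-q)}$), apply the $3$-dissection \eqref{eq:1/-phi--3dis}, and extract the residue-$0$ part to land on the cube. Your presentation is slightly tidier in that you absorb the $\varphi(-q^3)$ factor before huffing so the cube is visible at once, whereas the paper carries the $f_i$ notation through and simplifies at the end, but the argument is the same.
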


\begin{proof}
	Recall that
	\begin{align*}
		\sum_{n\ge 0} P_{1^{-2}2^{1}3^{2}6^{-1}}(n) q^n = \frac{f_2 f_3^2}{f_1^2 f_6}.
	\end{align*}
	We have
	\begin{align*}
		\frac{f_2 f_3^2}{f_1^2 f_6} = \frac{f_3^2}{f_6}\cdot \frac{1}{\varphi(-q)} \overset{\eqref{eq:1/-phi--3dis}}{=} \frac{f_3^2}{f_6}\cdot \left(\frac{\varphi(-q^9)^3}{\varphi(-q^3)^4}\left(1+2qw(q^3)+4q^2 w(q^3)^2\right)\right).
	\end{align*}
	Therefore,
	\begin{align*}
		\UU_3\left(\frac{f_2 f_3^2}{f_1^2 f_6}\right) = \frac{f_1^2}{f_2}\cdot \frac{\varphi(-q^3)^3}{\varphi(-q)^4} = \left(\frac{f_2 f_3^2}{f_1^2 f_6}\right)^3,
	\end{align*}
	as required.
\end{proof}

\subsubsection{Sequence A385520}

\begin{theorem}\label{th:A293306}
	The numbers $P_{1^{-1} 2^1 3^{-1} 4^{-1} 6^3 12^{-1}}(n)$ form a $3$-convolutive sequence.
\end{theorem}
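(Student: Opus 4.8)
The plan is to mirror the analytic argument used for Sequence A098151 (Theorem~\ref{th:A098151}), but this time invoking the $3$-dissection of $1/\psi(-q)$ in \eqref{eq:1/psi--3dis} rather than that of $1/\varphi(-q)$. First I would isolate the factor carrying the genuine modulo-$3$ dependence by writing
\begin{align*}
	\frac{f_2 f_6^3}{f_1 f_3 f_4 f_{12}} = \frac{f_6^3}{f_3 f_{12}}\cdot \frac{f_2}{f_1 f_4} = \frac{f_6^3}{f_3 f_{12}}\cdot \frac{1}{\psi(-q)},
\end{align*}
using the relation $\psi(-q) = f_1 f_4/f_2$ recorded before the $3$-dissection lemma. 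The crucial observation is that the cofactor $f_6^3/(f_3 f_{12})$ is a power series in $q^3$ alone.

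Next I would substitute \eqref{eq:1/psi--3dis} and note that the coefficient $\dfrac{f_6^3}{f_3 f_{12}}\cdot \dfrac{\psi(-q^9)^3}{\psi(-q^3)^4}$, together with $w(-q^3)$, is a function of $q^3$. Hence the three summands inside the parenthesis of \eqref{eq:1/psi--3dis} carry $q$-exponents congruent to $0$, $1$, and $2$ modulo $3$, respectively, so applying $\HH_3$ retains only the first summand and sends $q^3\mapsto q$:
\begin{align*}
	\HH_3\left(\frac{f_2 f_6^3}{f_1 f_3 f_4 f_{12}}\right) = \frac{f_2^3}{f_1 f_4}\cdot \frac{\psi(-q^3)^3}{\psi(-q)^4}\cdot \frac{1}{w(-q)^2}.
\end{align*}

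It then remains to rewrite this eta--theta expression purely in terms of the $f_k$ and verify that it equals the cube of the original series. The theta factors are immediate from $\psi(-q) = f_1 f_4/f_2$ and its $q\mapsto q^3$ analogue $\psi(-q^3) = f_3 f_{12}/f_6$. The main obstacle, and the only genuinely new computation relative to the A098151 proof, is evaluating $w(-q)$ from the definition $w(q) = f_1 f_6^3/(f_2 f_3^3)$ in \eqref{def:w(q)}. For this I would apply the standard $q\mapsto -q$ substitution rules for eta functions: even-index $f_k$ are fixed, while odd-index ones transform via $(-q;-q)_\infty = f_2^3/(f_1 f_4)$, i.e.\ $f_1\mapsto f_2^3/(f_1 f_4)$ and $f_3\mapsto f_6^3/(f_3 f_{12})$. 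A short calculation then gives
\begin{align*}
	w(-q) = \frac{f_2^2 f_3^3 f_{12}^3}{f_1 f_4 f_6^6}, \qquad\text{so}\qquad \frac{1}{w(-q)^2} = \frac{f_1^2 f_4^2 f_6^{12}}{f_2^4 f_3^6 f_{12}^6}.
\end{align*}

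Substituting these three expressions into the displayed $\HH_3$ formula and collecting the exponent of each $f_k$ collapses the right-hand side to $f_2^3 f_6^9/(f_1^3 f_3^3 f_4^3 f_{12}^3)$, which is exactly $\bigl(f_2 f_6^3/(f_1 f_3 f_4 f_{12})\bigr)^3$; by Definition~\ref{def:mconv} this yields the claimed $3$-convolutivity. I expect the sign and substitution bookkeeping in passing from $w(q)$ to $w(-q)$ to be the only delicate point, with everything afterward being routine eta-quotient arithmetic.
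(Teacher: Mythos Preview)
Your proposal is correct and follows essentially the same approach as the paper's own proof: factor out $f_6^3/(f_3 f_{12})$, apply the $3$-dissection \eqref{eq:1/psi--3dis} of $1/\psi(-q)$, extract the $q^0$-residue class via $\HH_3$, and then simplify using the explicit formula for $w(-q)$ obtained from $(-q;-q)_\infty = f_2^3/(f_1 f_4)$. The only difference is cosmetic---you carry out the final eta-quotient bookkeeping explicitly, whereas the paper leaves that last simplification to the reader.
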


\begin{proof}
	Recall that
	\begin{align*}
		\sum_{n\ge 0} P_{1^{-1} 2^1 3^{-1} 4^{-1} 6^3 12^{-1}}(n) q^n = \frac{f_2 f_6^3}{f_1 f_3 f_4 f_{12}}.
	\end{align*}
	We have
	\begin{align*}
		\frac{f_2 f_6^3}{f_1 f_3 f_4 f_{12}} = \frac{f_6^3}{f_3 f_{12}}\cdot \frac{1}{\psi(-q)} \overset{\eqref{eq:1/psi--3dis}}{=} \frac{f_6^3}{f_3 f_{12}}\cdot \left(\frac{\psi(-q^9)^3}{\psi(-q^3)^4}\left(\frac{1}{w(-q^3)^2}+\frac{q}{w(-q^3)}+q^2\right)\right).
	\end{align*}
	In addition, applying the relation $(-q;-q)_{\infty}=f_2^3/(f_1f_4)$ to \eqref{def:w(q)} gives
	\begin{align*}
		w(-q) = \frac{f_6^3}{f_2}\cdot \frac{f_2^3}{f_1f_4}\frac{f_3^3f_{12}^3}{f_6^9} = \frac{f_2^2 f_3^3 f_{12}^3}{f_1 f_4 f_6^6}.
	\end{align*}
	Therefore,
	\begin{align*}
		\UU_3\left(\frac{f_2 f_6^3}{f_1 f_3 f_4 f_{12}}\right) = \frac{f_2^3}{f_1 f_{4}}\cdot \frac{\psi(-q^3)^3}{\psi(-q)^4}\frac{1}{w(-q)^2} = \left(\frac{f_2 f_6^3}{f_1 f_3 f_4 f_{12}}\right)^3,
	\end{align*}
	as required.
\end{proof}

\section{``Artificially'' constructed convolutive sequences }\label{sec:artificial}

For the moment, there is still a philosophical problem left --- \textit{must we require the generating function of our desired convolutive sequence to be an eta-product?} The answer to this question can be gleaned from the facts that the list of convolutive sequences is indeed endless and that most convolutive sequences seem to be artificial. 

There are two ways to think about the above claims.

First, we argue that convolutive sequences can be constructed directly by definition. To see this, let us assume that $(a_n)_{n\ge 0}$ is $m$-convolutive, and that we already know the values of $a_n$ for all $n$ not divisible by $m$. Then we are left to construct the values of $a_{mn}$. To begin with, we note that $a_0$ satisfies
\begin{align*}
	a_0 = a_0^m
\end{align*}
by definition, and hence $a_0$ is either $0$ or an $(m-1)$-th root of unity. Let us fix such an $a_0$. Then all $a_{mn}$ for $n\ge 1$ can be uniquely determined in an iterative way because by Definition~\ref{def:mconv},
\begin{align*}
	\sum_{n\ge 0} a_{mn} q^n = \left(\sum_{n\ge 0} a_n q^n\right)^m,
\end{align*}
which indicates that $a_{mn}$ is the coefficient of $q^n$ in the expansion of the polynomial
\begin{align*}
	\left(\sum_{k=0}^n a_k q^k\right)^m,
\end{align*}
while the information about $(a_k)_{k=0}^n$ is already known once we notice that $mn>n$ since $m\ge 2$.

On the other hand, convolutive sequences can also be constructed indirectly by certain functional equations for their generating series. Again, let $(a_n)_{n\ge 0}$ be $m$-convolutive. We write the generating function of $a_n$ as
\begin{align*}
	A(q) := \sum_{n\ge 0} a_n q^n.
\end{align*}
Then to obtain the desired $m$-convolutivity, we only need to require that $A(q)$ satisfies the functional equation:
\begin{align}\label{eq:funeqn-general}
	A(q) = A(q^m)^m + q B_1(q^m) + \cdots + q^{m-1} B_{m-1}(q^m),
\end{align}
where $B_1(q),\ldots,B_{m-1}(q)\in \mathbb{C}[[q]]$ may be either correlated to or independent of $A(q)$. As an example, we may specify the functional equation for a $2$-convolutive series as
\begin{align}\label{eq:A073707}
	A(q) = A(q^2)^2+qA(q^2)^2.
\end{align}
The coefficient sequence of this $A(q)$ is  interesting as it popped up in our search of the OEIS~\cite{OEIS}; this is indexed as sequence A073707. However, if the functional equation \eqref{eq:funeqn-general} becomes as generic as possible, then the corresponding $m$-convolutive sequence seems less meaningful.

For the OEIS sequence A073707, we rewrite \eqref{eq:A073707} as
\begin{align*}
	\frac{A(q)}{A(q^2)^2} = 1+q,
\end{align*}
which, after iterations, gives us
\begin{align}
	A(q) = \prod_{k\ge 0} (1+q^{2^k})^{2^k}.
\end{align}

More generally, we assume that $A_m(q)$ admits the functional equation
\begin{align}\label{eq:Am}
	A_m(q) = A_m(q^m)^m+qA_m(q^m)^m.
\end{align}
Then
\begin{align}
	A_m(q) = \prod_{k\ge 0} (1+q^{m^k})^{m^k},
\end{align}
which produces an $m$-convolutive sequence $(a_m(n))_{n\ge 0}$ defined by
\begin{align}
	\sum_{n\ge 0} a_m(n) q^n := A_m(q) = \prod_{k\ge 0} (1+q^{m^k})^{m^k}.
\end{align}

It remains to ask if there is any combinatorial meaning for this sequence. One of the simplest ways to interpret this sequence is by means of \emph{colored $m$-ary strict partitions}, which are partitions into powers of $m$ with each part of size $m^k$ assigned with one of $m^k$ colors for each $k\ge 0$, while no two parts share the same size and color simultaneously. For example, when $m=2$, we have $a_2(4)=5$, and the corresponding five colored binary strict partitions are
\begin{align*}
	4_1,\  4_2,\  4_3,\  4_4,\  2_1+2_2,
\end{align*}
where we mark colors by subscripts.

Let $\cA_m$ denote the set of colored $m$-ary strict partitions.

In order to combinatorially explain the $m$-convolutivity of the sequence $(a_m(n))_{n\ge 0}$ in this context, we look back at Remark~\ref{rmk:weight-halving} and make the following generalization.

\begin{lemma}
	Let $\cU$ be a set of combinatorial objects and denote by $u(n)$ the counting function which enumerates the number of objects $\pi$ in $\cU$ with weight $|\pi| = n$. Then the sequence $(u(n))_{n\ge 0}$ is $m$-convolutive if there is a bijection 
	\begin{align*}
		\begin{array}{ccc}
			[\cU]_m & \to & \cU^m\\
			\pi & \mapsto & (\lambda^{(1)},\ldots,\lambda^{(m)})
		\end{array}
	\end{align*}
	such that $|\pi| = m\cdot (|\lambda^{(1)}|+\cdots+|\lambda^{(m)}|)$, where $[\cU]_m$ is the subset of $\cU$ consisting of all members whose weight is a multiple of $m$.
\end{lemma}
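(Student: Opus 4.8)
The plan is to generalize the argument implicit in Remark~\ref{rmk:weight-halving}, which treats the case $m=2$. The key observation is that $m$-convolutivity is precisely an identity between formal power series, and a weight-scaling bijection is exactly the combinatorial device that produces such an identity. First I would encode the set $\cU$ by its generating function $U(q) := \sum_{n\ge 0} u(n) q^n$, so that $U(q) = \sum_{\pi \in \cU} q^{|\pi|}$. The goal \eqref{eq:mconv} is the assertion that $\HH_m(U(q)) = U(q)^m$, i.e., that $\sum_{n\ge 0} u(mn) q^n$ equals $\left(\sum_{n\ge 0} u(n) q^n\right)^m$.

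The main step is to translate each side of this identity into a generating function over a combinatorial set, and then match them via the hypothesized bijection. For the left-hand side, I would observe that the subset $[\cU]_m$ has generating function $\sum_{\pi \in [\cU]_m} q^{|\pi|} = \sum_{n\ge 0} u(mn) q^{mn}$, since $[\cU]_m$ collects exactly those objects whose weight is divisible by $m$. For the right-hand side, the $m$-fold Cartesian product $\cU^m$ has generating function $U(q)^m = \sum_{(\lambda^{(1)},\ldots,\lambda^{(m)})} q^{|\lambda^{(1)}|+\cdots+|\lambda^{(m)}|}$, since the weight of a tuple is the sum of the component weights. The hypothesized bijection sends $\pi \in [\cU]_m$ to a tuple $(\lambda^{(1)},\ldots,\lambda^{(m)}) \in \cU^m$ with $|\pi| = m\cdot(|\lambda^{(1)}|+\cdots+|\lambda^{(m)}|)$; summing $q^{|\lambda^{(1)}|+\cdots+|\lambda^{(m)}|}$ over the image and using that the weight of the preimage is exactly $m$ times this sum, the bijection yields
\begin{align*}
	U(q)^m = \sum_{(\lambda^{(1)},\ldots,\lambda^{(m)}) \in \cU^m} q^{|\lambda^{(1)}|+\cdots+|\lambda^{(m)}|} = \sum_{\pi \in [\cU]_m} q^{|\pi|/m}.
\end{align*}

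It then remains to identify the final sum $\sum_{\pi \in [\cU]_m} q^{|\pi|/m}$ with $\HH_m(U(q)) = \sum_{n\ge 0} u(mn) q^n$. This follows because as $\pi$ ranges over $[\cU]_m$, the exponent $|\pi|/m$ ranges over all nonnegative integers $n$, with the coefficient of $q^n$ counting the objects $\pi \in \cU$ of weight exactly $mn$, namely $u(mn)$. Combining the two displayed equalities gives $U(q)^m = \sum_{n\ge 0} u(mn) q^n$, which is precisely \eqref{eq:mconv}, establishing that $(u(n))_{n\ge 0}$ is $m$-convolutive.

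I expect no genuine obstacle here: the statement is essentially a bookkeeping lemma, and the only point requiring a moment of care is the weight-scaling factor of $m$. One must verify that the exponent in the generating function for $[\cU]_m$ is correctly divided by $m$ (reflecting the factor in $|\pi| = m\cdot(|\lambda^{(1)}|+\cdots+|\lambda^{(m)}|)$) so that the change of variable $q^m \mapsto q$ implicit in the Huffing operator $\HH_m$ aligns the two sides. Once the weight relation is tracked correctly, the identity drops out immediately, generalizing Remark~\ref{rmk:weight-halving} from $m=2$ to arbitrary $m\ge 2$.
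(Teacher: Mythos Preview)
Your proposal is correct and complete. The paper actually states this lemma without proof, treating it as an immediate generalization of Remark~\ref{rmk:weight-halving}; your generating-function bookkeeping is precisely the routine argument one would give if asked to spell out the details.
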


\begin{theorem}
	There is a bijection
	\begin{align*}
		\begin{array}{ccc}
			[\cA_m]_m & \to & \cA_m^m\\
			\pi & \mapsto & (\lambda^{(1)},\ldots,\lambda^{(m)})
		\end{array}
	\end{align*}
	such that
	\begin{align*}
		|\pi| = m\cdot (|\lambda^{(1)}|+\cdots+|\lambda^{(m)}|),
	\end{align*}
	and
	\begin{align*}
		\ell(\pi) = \ell(\lambda^{(1)})+\cdots+\ell(\lambda^{(m)}),
	\end{align*}
	where $\ell(\pi)$ refers to the number of parts in $\pi$. As a consequence, the numbers $a_m(n)$ form an $m$-convolutive sequence.
\end{theorem}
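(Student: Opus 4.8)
The plan is to construct the bijection explicitly by working with the colored $m$-ary strict partitions componentwise in each ``size slot.''  Recall that an element $\pi \in \cA_m$ is determined by choosing, for each $k \geq 0$, a subset of the $m^k$ available colors for the part $m^k$; equivalently, $\pi$ records a set $C_k \subseteq \{1,\ldots,m^k\}$ of colors used at size $m^k$, and the weight is $|\pi| = \sum_{k\ge 0} |C_k|\, m^k$.  The key structural observation I would exploit is that $|\pi|$ is a multiple of $m$ precisely when $|C_0| \equiv 0 \pmod m$, since every term $|C_k| m^k$ with $k \geq 1$ is already divisible by $m$, so the divisibility constraint defining $[\cA_m]_m$ lives entirely in the size-$1$ slot.

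First I would handle the size-$1$ slot.  Here we have a subset $C_0 \subseteq \{1,\ldots,m\}$ with $|C_0| \equiv 0 \pmod m$, which forces $C_0 = \varnothing$ or $C_0 = \{1,\ldots,m\}$.  The first case contributes weight $0$ and should map to empty size-$1$ slots in all $m$ target partitions; the second contributes weight $m$ and should distribute ``one full part of size $1$'' across the target so that the total reconstructed weight works out after multiplying by $m$.  Next, for each $k \geq 1$, I would map the size-$m^k$ slot of $\pi$ to the size-$m^{k-1}$ slots of the $m$ target partitions: a color in $\{1,\ldots,m^k\}$ can be written uniquely as a pair consisting of a ``block index'' in $\{1,\ldots,m\}$ and a ``residual color'' in $\{1,\ldots,m^{k-1}\}$ via Euclidean division, so the set $C_k$ decomposes into $m$ subsets $C_k^{(1)},\ldots,C_k^{(m)} \subseteq \{1,\ldots,m^{k-1}\}$.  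I would declare $C_k^{(j)}$ to be the color set at size $m^{k-1}$ in the $j$-th target partition $\lambda^{(j)}$.  This ``shift-and-split'' is manifestly a bijection on each slot because division-with-remainder is invertible, and assembling over all $k \geq 1$ together with the size-$1$ rule gives the global map.

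The weight bookkeeping is then a direct check: a color in $C_k$ contributing $m^k$ to $|\pi|$ is sent to a color contributing $m^{k-1}$ to some $|\lambda^{(j)}|$, so $\sum_j |\lambda^{(j)}| = \tfrac{1}{m}\sum_{k\ge 1}|C_k|\,m^k$, and adding the size-$1$ contribution (which is either $0$ or $m$) and multiplying by $m$ recovers $|\pi| = m\cdot(|\lambda^{(1)}|+\cdots+|\lambda^{(m)}|)$ exactly.  The length statement $\ell(\pi) = \sum_j \ell(\lambda^{(j)})$ is immediate since every color of $\pi$ is sent to exactly one color in exactly one $\lambda^{(j)}$ and the size-$1$ slot of $\pi$ (when nonempty) contributes $m$ parts that are redistributed as $m$ parts of size $1$ in the targets.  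Invertibility follows by reversing each step, the only point requiring care being that the reverse map must feed the empty-or-full dichotomy correctly into the size-$1$ slot of $\pi$ and merge the $m$ residual color sets back via the inverse of Euclidean division.

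The main obstacle I anticipate is cleanly treating the size-$1$ slot so that the ``$\times m$'' in the weight identity is honored: unlike the higher slots, which shift $m^k \to m^{k-1}$, the lowest slot has no further slot to shift down into, and the factor of $m$ must be absorbed by the divisibility constraint $|C_0| \in \{0,m\}$ rather than by a shift.  Making sure the chosen convention here renders the map a genuine bijection onto all of $\cA_m^m$ (and not merely an injection) is where I would concentrate the verification, since the target has no divisibility restriction and one must confirm every $m$-tuple is hit exactly once.
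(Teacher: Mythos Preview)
Your shift-and-split via Euclidean division for the slots $k\ge 1$ is exactly the paper's construction, and the weight and length bookkeeping you sketch is correct. However, there is a genuine slip in your treatment of the size-$1$ slot that you should repair.

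You correctly set up $C_k\subseteq\{1,\ldots,m^k\}$ at the outset, but then write ``Here we have a subset $C_0\subseteq\{1,\ldots,m\}$.'' This is inconsistent: for $k=0$ there are $m^0=1$ colors, so $C_0\subseteq\{1\}$ and $|C_0|\in\{0,1\}$. The divisibility condition $|C_0|\equiv 0\pmod m$ therefore forces $C_0=\varnothing$ outright; the ``second case'' $C_0=\{1,\ldots,m\}$ simply does not exist. Consequently your proposed rule ``map $C_0=\varnothing$ to empty size-$1$ slots in all targets'' is also wrong as stated: the size-$1$ slots of the $\lambda^{(j)}$ are already populated by the $k=1$ instance of your shift-and-split (a part $m$ of color $c\in\{1,\ldots,m\}$ in $\pi$ becomes the unique size-$1$ part in $\lambda^{(c)}$), and the source's $C_0$ plays no role whatsoever.

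Once you delete the erroneous $C_0$ discussion and observe instead that $\pi\in[\cA_m]_m$ has no parts of size $1$, your map is precisely the paper's: every part $m^k_{c}$ with $k\ge 1$ is sent to the part $m^{k-1}_{c'}$ in $\lambda^{(r)}$, where $c=m(c'-1)+r$ with $1\le r\le m$ and $1\le c'\le m^{k-1}$. The ``main obstacle'' you flagged dissolves, since there is no size-$1$ data in the source to absorb; surjectivity onto all of $\cA_m^m$ follows because the inverse map rebuilds each $C_k$ ($k\ge 1$) from the $m$ color sets at level $k-1$ in the targets and leaves $C_0=\varnothing$.
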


\begin{proof}
	For $\pi \in [\cA_m]_m$, it is clear that there is no part of size $1$ because the weight of $\pi$ has to be a multiple of $m$. In fact, all parts of $\pi$ are of the form $m_{c_k}^k$ with $k\ge 1$, where the part has size $m^k$ and the color $c_k$ ranges over $1\le c_k\le m^k$. Moreover, each part $m_{c_k}^k$ appears at most once. Write
	\begin{align*}
		c_k = m (c'_k - 1) + r_{c_k},
	\end{align*}
	where $1\le r_{c_k}\le m$ and $1\le c'_k\le m^{k-1}$. In particular, this representation is unique. Now we scan through all parts $m_{c_k}^k$ in $\pi$ and construct the desired bijection by first sending $m_{c_k}^k$ to the subpartition $\lambda^{(r_{c_k})}$ in $\cA_m^m$ and then changing it to $m_{c'_k}^{k-1}$. Note that for this new part of size $m^{k-1}$, its color also ranges over $1\le c'_k\le m^{k-1}$. Clearly, by doing so, we have a partition $m$-tuple $(\lambda^{(1)},\ldots,\lambda^{(m)})\in \cA_m^m$ with $|\pi| = m\cdot (|\lambda^{(1)}|+\cdots+|\lambda^{(m)}|)$ and $\ell(\pi) = \ell(\lambda^{(1)})+\cdots+\ell(\lambda^{(m)})$. Also, this process is invertible.
\end{proof}

\section{Conclusion}\label{sec:conclusion}

A crucial step in this work revolves around an exhaustive search of the OEIS. Now we make a few comments on this experimental procedure.

It is notable that the largest initial task in our process was to obtain the entire OEIS database (which, at the time of this writing, contains more than $388\,000$ sequences).  This might sound daunting; however, it is not, thanks to the forethought of the developers of the OEIS.  One can quickly download the entire contents of the OEIS at the following link:
\begin{center}
	\url{https://oeis.org/wiki/Compressed_Versions_of_the_Database}
\end{center}
More information about this process can be found at:
\begin{center}
	\url{https://oeis.org/wiki/Download}
\end{center}

Once the contents of the database have been downloaded, it is a relatively straightforward matter to remove the A-numbers preceding each sequence, leaving one with a large file that only contains the sequence values themselves (with each sequence on its own line in the file). At that point, we chose to turn to \textit{Maple} to complete our analysis.  

After importing the data from the OEIS to a \textit{Maple} worksheet, we examined term by term if the following condition holds for each OEIS sequence $(a_n)_{n\ge 0}$:
\begin{align*}
	a_{2n} \overset{?}{=} \sum_{i=0}^n a_ia_{n-i}.
\end{align*}
This condition is equivalent to the one arising from the generating series of $a_n$, i.e., the $m=2$ case of \eqref{eq:mconv}.

Thanks to this search, we were able to identify the sequences noted in this paper. Of course, our \emph{Maple} procedure also identified other sequences in the OEIS satisfying the requirements of $2$-convolutivity.  For the most part, these were very short sequences in the OEIS that, somewhat trivially, meet the requirements of $2$-convolutivity.  In essence, we viewed these as ``false positives'' and skipped over them. The abundance of such false positives was the initial motivation for us to restrict our attention to eta-products, which further led us to the considerations in Section~\ref{sec:artificial}.

Now that we have a more concrete form of the convolutive sequences that we hope to explore, one may wonder whether we could perform a search directly on eta-products. Unfortunately, this task becomes computationally intensive. Nevertheless, there is still an observation, as pointed out by one of the referees, that may reduce the amount of search slightly. To be specific, it is sufficient to work on eta-products of \emph{weight zero}, because the unitizing operator of degree $m$ preserves the weight of a modular form, while raising such a form to the $m$-th power multiplies the weight by $m$.

As we close this work, a number of fairly obvious questions arise:
\begin{itemize}[itemsep=4pt]
	\item Can one provide combinatorial proofs of the $2$- and $3$-convolutivity of the sequences where the only proofs that we have provided rely on generating function manipulations?  

	\item Can one construct an infinite family of $2$- or $3$-convolutive sequences whose generating functions are primitive eta-products? Or, in contrast, is the list of primitive $2$- and $3$-convolutive eta-products finite?

	\item Can one find a primitive eta-product that is $m$-convolutive for a certain $m\ge 4$? (No such sequence was identified in our exhaustive search of the OEIS.)
\end{itemize}
 Our hope is that others will be motivated by the results herein to answer the above questions.

\subsection*{Acknowledgements}

Shane Chern was supported by the Austrian Science Fund grant 10.55776/F1002. Shishuo Fu was supported by the National Natural Science Foundation of China grants 12171059 and 12371336. The authors gratefully acknowledge the referees for their comments and suggestions, especially regarding the observation on weight-zero eta-products in Section~\ref{sec:conclusion}. The authors also thank Frank Garvan for his assistance in creating a straightforward way to import the OEIS sequence data into \emph{Maple}.

\bibliographystyle{amsplain}

\end{document}